\title{Centralisers of Formal Maps}
\author{Anthony G. O'Farrell}
\address{Mathematics and Statistics\\Maynooth University\\Co Kildare\\W23 HW31\\Ireland}
\email{anthony.ofarrell@mu.ie}
\date{\today:\currenttime}
\subjclass[2020]{20E99}
\keywords{iteration, formal maps, power series, composition group,
centralisers}
\DeclarePairedDelimiter{\floor}{\lfloor}{\rfloor}
\newcommand{\dist}{\textup{dist}}
\newcommand{\F}{\mathcal{F}}
\newcommand{\G}{\mathcal{G}}
\newcommand{\gl}{\textup{gl}}
\newcommand{\GL}{\textup{GL}}
\newcommand{\hcf}{\textup{hcf}}
\newcommand{\HOT}{\textup{HOT}}
\newcommand{\M}{\mathcal{M}}
\newcommand{\N}{\mathbb{N}}
\newcommand{\Q}{\mathbb{Q}}
\newcommand{\spt}{\textup{spt}}
\newcommand{\TL}{\tilde L}
\newcommand{\wt}{\textup{wt}}
\newcommand{\Z}{\mathbb{Z}}
\newcommand{\ONE}{1\hskip-4pt1}
\newcommand{\pw}[1]{^{\circ #1}}
\newcommand{\half}{{\textstyle\frac12}}
\newcommand{\ignore}[1]{}
\newtheorem{theorem}{Theorem}
\newtheorem{corollary}{Corollary}[section]
\newtheorem{proposition}{Proposition}[section]
\theoremstyle{definition}
\newtheorem{definition}{Definition}
\newtheorem{conjecture}{Conjecture}
\newtheorem*{remark*}{Remark}
\begin{document}

\begin{abstract}
We consider the formal maps in any finite dimension $d$ with coefficients 
in an integral domain $K$ with identity. Those invertible under 
formal composition form a group $\mathcal{G}$. 
We consider the centraliser $C_g$ of an element $g\in\mathcal{G}$ which 
has infinite order and	is tangent to the identity of $\mathcal{G}$. If $g$ has infinite order and $K$ is a field of characteristic zero we show that $C_g$ contains an isomorphic copy of the additive group $(K,+)$. If $g$ has infinite order and $K$ has positive characteristic we show that $C_g$ contains an uncountable abelian subgroup. 
The proofs are quite different in finite characteristic and in characteristic zero, but are connected by so-called sum functions.
\end{abstract}

\maketitle

\section{Introduction}
We begin by fixing some terminology and notation. 

\subsection{Monic monomials}\label{SS:S}
By a \emph{monic monomial in $d$ variables} we mean an element
$x^i:= x_1^{i_1}\cdots x_d^{i_d}$, where $x=(x_1,\ldots x_d)$
and $i\in\Z_+^d$ is a multi-index with nonnegative entries.
The \emph{degree} of the monic monomial $x^i$ is $|i|:= i_1+\cdots+i_d$.
Let $S=S_d$ denote the set of all monic monomials in $d$ variables.
So $S$
has elements $1$,$x_1$,$\ldots$,$x_d$,$x_1^2$,$x_1x_2$,$\ldots$,
$x_1^3$,$x_1x_2^2$, and so on.
If $m=x^i$, then we denote $x_j^{i_j}$ by $m_j$. For instance,
$(x_1x_2^2x_3^3)_2=x_2^2$.

The set $S$ has the structure
of a commutative semigroup with identity, where the product
is defined by $x^i\cdot x^j:= x^{i+j}$.  The semigroup
$S$ has cancellation, and if $m$ and $n$ belong to $S$,
then we call $m$ \emph{a factor} of  $p=m\cdot n$, we write $m|p$,
and write $p/m=n$. Each nonempty subset $A\subset S$ has a highest
common factor, which we denote by $\hcf(A)$.

\subsection{Formal power series}
Let $K$ be an integral domain with identity, and $d\in\N$.
We form the $K$-algebra $\mathcal{F}:=\mathcal{F}_d := K[[x_1,\ldots,x_d]]$
of all formal power series in $d$ variables, with coefficients in
$K$.
An element $f\in\mathcal{F}$ is a formal sum
$$ f = \sum_{m\in S} f_m m, $$
where $f_m\in K$ for each $m\in S$.
Addition is done term-by-term, as is scalar multiplication,
and multiplication by summing all the coefficients
of terms from the factors corresponding to monomials
with the same product. More precisely,
$$ (f\cdot g)_m = \sum_{p\in S} \sum_{q\in S, pq=m} f_pg_q, \forall m\in S.$$
Equivalently,
$$ (f\cdot g)_m = \sum_{p\in S, p|m} f_p g_{m/p}. $$

The formal series
$1$, which has $1_1=1$ and $1_m=0$
for all other monic monomials $m$, is the multiplicative identity
of $\F$.

In the remainder of this section, all summations will be over
indices drawn from $S$, so the formula for the product becomes just
$$ (f\cdot g)_m = \sum_{p|m} f_p g_{m/p}. $$

The map $f\mapsto f_1$ is a surjective $K$-algebra homomomorphism
from $\F$ onto $K$.

For $f=\sum_m f_mm \in\F$, we set
$$  \spt(f): = \{m\in S: f_m\not=0\}. $$
If $f\not=0$, then $\spt(f)$ is nonempty, and 
we define the \emph{vertex of $f$} to be the monic monomial
$$ v(f):= \hcf( \spt(f) ). $$
It is easy to see that
$$ v(f)v(g) | v(fg), \ \forall f,g\in\F.$$
Equality holds in dimension $d=1$, i.e. 
the index of the lowest-order nonzero
term in $f(x)g(x)$ is the sum of the
indices of the lowest-order terms in
$f(x)$ and $g(x)$.

\begin{proposition}\label{P:v(f)-factor}
If $f\in\F$ is nonzero, then there exists $h\in\F$
with $v(h)=1$ and $f=v(f)h$.
\end{proposition}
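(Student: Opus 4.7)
The plan is to construct $h$ explicitly by ``dividing out'' $v(f)$ termwise, verify that $v(f)\cdot h = f$ by a direct coefficient computation, and then deduce $v(h)=1$ from the maximality of $v(f)$ as a common factor of $\spt(f)$.

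Set $v:=v(f)$. By definition $v$ divides every $m\in\spt(f)$, so for each such $m$ the quotient $m/v\in S$ is well defined. I would define
$$ h := \sum_{m\in\spt(f)} f_m\,(m/v),$$
equivalently $h_n := f_{vn}$ for all $n\in S$. This is a legitimate element of $\F$ because the map $m\mapsto m/v$ is injective on $\spt(f)$ by cancellation in $S$, so distinct terms of $f$ contribute to distinct coefficients of $h$.

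The next step is to check $v\cdot h=f$. Viewing the monic monomial $v$ as a series with a single nonzero coefficient equal to $1$ at $v$, the product formula gives $(v\cdot h)_m = \sum_{p|m} v_p\, h_{m/p} = h_{m/v}$ when $v|m$ and $0$ otherwise. If $m\in\spt(f)$, then $v|m$ and $h_{m/v}=f_m$ by construction; if $m\notin\spt(f)$ and $v|m$, then $vn=m$ with $n=m/v$, and since $m\notin\spt(f)$, $h_n=f_m=0$; if $v\nmid m$, both sides vanish. So the identity $f=v\cdot h$ holds coefficientwise.

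Finally, I would argue $v(h)=1$ by contradiction. Suppose $w:=v(h)\neq 1$, so $w$ is a nontrivial common factor of every $n\in\spt(h) = \{m/v : m\in\spt(f)\}$. Then $v\cdot w$ divides every $m\in\spt(f)$, hence $vw$ is a common factor of $\spt(f)$. By maximality of $v=\hcf(\spt(f))$, we get $vw\,|\,v$, and cancellation in $S$ yields $w\,|\,1$, forcing $w=1$, a contradiction. Thus $v(h)=1$, completing the proof. The argument is essentially bookkeeping; the only step requiring care is ensuring that hcf-maximality of $v(f)$ is used correctly via cancellation in $S$, which is the one place where the semigroup structure from Section~\ref{SS:S} does real work.
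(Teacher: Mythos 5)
Your proof is correct and uses essentially the same construction as the paper: you define $h$ by dividing $v(f)$ out of each monomial of $\spt(f)$ termwise and verify $f=v(f)h$ coefficientwise. The only (harmless) divergence is in the final step: the paper shows $v(h)=1$ directly by noting that for each $j$ the $x_j$-exponent of $v(f)$ is attained by some $p\in\spt(f)$, so $p/v(f)$ does not involve $x_j$, whereas you argue by maximality of $\hcf(\spt(f))$ plus cancellation in $S$; both justifications are valid.
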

\begin{proof}
$v(f)$ is a factor of each $m\in\spt(f)$,
so we can define
$$ h:= \sum_{m, f_m\not=0} f_m\cdot (m/v(f)) $$
and we have
an $h\in\F$ and $f=v(f)h$.
The fact that $v(f)=\hcf(\spt(f)$ implies that
for each $j\in\{1,\ldots,d\}$ there exists
$p\in\spt(f)$ with $f_p\not=0$ and $p_j=v(f)_j$.
Thus $q:=p/v(f)$ belongs to $\spt(h)$ and has $q_j=1$
(i.e. $q$ `does not involve $x_j$'). Thus
$v(h)=1$.
\end{proof}

If $f=ph$ with $p\in S$ and $h\in\F$,
then we write $h=f/p$. Note that $h$
is uniquely determined by $f$ and $p$,
because $h_m = f_{m/p}$ whenever
$h_m\not=0$.

\begin{proposition}
$\F_d$ is an integral domain with identity
	for each $d\in\N$.
\end{proposition}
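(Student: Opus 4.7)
The plan is to show that $\mathcal{F}$ has no zero divisors, since the other axioms for a commutative ring with identity $1$ follow routinely from the corresponding properties of $K$ together with the observation that the product coefficient $(fg)_m = \sum_{p|m} f_p g_{m/p}$ is a \emph{finite} sum: only finitely many $p \in S$ divide a given $m$. The classical strategy is the leading-monomial argument: equip $S$ with a well-ordering compatible with multiplication, then show that the leading coefficient of $fg$ is the product of the leading coefficients of $f$ and $g$.

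I would use the degree-lexicographic order on $S$: declare $x^i < x^j$ iff $|i| < |j|$, or $|i| = |j|$ and $i$ precedes $j$ lexicographically. Only finitely many monomials lie below any given one, so this is a well-ordering of $S$; and translation invariance ($m < n$ implies $pm < pn$ for every $p \in S$) is immediate since exponent addition preserves both total degree and lexicographic comparison. Given nonzero $f, g \in \mathcal{F}$, the nonempty sets $\spt(f)$ and $\spt(g)$ then have least elements $\mu(f)$ and $\mu(g)$. Expanding $(fg)_{\mu(f)\mu(g)}$ via the product formula, any nonzero summand arises from $p \in \spt(f)$ with $q := \mu(f)\mu(g)/p \in \spt(g)$, so $p \geq \mu(f)$ and $q \geq \mu(g)$; translation invariance forces $pq > \mu(f)\mu(g)$ unless both inequalities are equalities (using cancellation to conclude $q < \mu(g)$ from $p > \mu(f)$, which would contradict $q \in \spt(g)$). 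Hence the only surviving contribution is $f_{\mu(f)}\,g_{\mu(g)}$, which is nonzero because $K$ is an integral domain, and therefore $fg \neq 0$.

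The only point demanding real care is well-foundedness: $\spt(f)$ is typically infinite, so a mere linear order on $S$ does not guarantee a least element, and the natural pointwise partial order on $\Z_+^d$ is not linear. Degree-lex is the cheapest order that is simultaneously well-founded, total, and translation invariant, and it reduces the uniqueness-of-minimizer step in the product formula to a one-line inequality check. Beyond that, the argument is a bookkeeping exercise with no deeper obstacle.
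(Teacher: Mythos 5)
Your argument is correct, but it proceeds along a genuinely different route from the paper. You order all of $S$ by a degree-lexicographic well-ordering compatible with the semigroup operation and show that the least monomial of a product is the product of the least monomials, with matching leading coefficient $f_{\mu(f)}g_{\mu(g)}\neq 0$; the key verifications (only finitely many monomials below a given one, translation invariance, and the cancellation step ruling out $p>\mu(f)$) are all present and sound, and the finiteness of the coefficient sums disposes of the routine ring axioms. The paper instead argues by induction on the dimension $d$: the base case $d=1$ is exactly the one-variable leading-term fact, and the inductive step normalises $f$ and $f'$ so that $v(f)=v(f')=1$ (using Proposition \ref{P:v(f)-factor}) and splits off the part of each series not involving $x_{d+1}$, reducing to the integral-domain property of $\F_d$. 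Your approach buys uniformity in $d$ (no induction, no appeal to the vertex machinery) and in fact proves slightly more, namely that the deg-lex leading monomial is multiplicative --- a $d$-dimensional analogue of the relation $v(ff')=v(f)v(f')$ that the paper only invokes for $d=1$; the paper's approach buys economy within its own framework, since it recycles the vertex normalisation already established and leans on the familiar single-variable statement rather than introducing a monomial order.
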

\begin{proof}
We use induction on $d$.

When $d=1$, the result follows from
the fact that $v(fg)=v(f)v(g)$

Suppose $\F_d$ is an integral domain, and 
$F_{d+1}$ is not.  
	
	Choose two nonzero $f,f'\in F_{d+1}$,
with $ff'=0$. 	
	Replacing $f$ by $f/v(f)$
and $f'$ by $f'/v(f')$, we may assume
that $v(f)=1=v(f')$.  We may regard
monomials in $d$ variables as monomials
in $d+1$ variables that do not involve
$x_{d+1}$, and define
$$ h:= \sum_{m\in S_d} f_mm,\quad
h':= \sum_{m\in S_d} f'_mm.$$
Then
$$ hh' = \sum_{m\in S_d} (ff')_mm,$$
the sum of all the terms in $ff'$ that do not
involve $x_{d+1}$. Thus $hh'=0$, so by
hypothesis $h=0$ or $h'=0$.
	But $h=0$ means $x_{d+1}|v(f)$,
	contradicting $v(f)=1$.  Similarly,
	$h'=0$ is impossible. 
\end{proof}

For any ring $R$ with identity, we denote by $R^\times$
the multiplicative group
of invertible elements, or units, of $R$.

We denote by $\mathcal{M}$ the ideal
$$ \mathcal{F}x_1+\cdots+\mathcal{F}x_d 
=\{f\in\mathcal{F}: f_1=0\}. $$
(This ideal is maximal if and only if $K$ is a field.
In that case, $\mathcal{F}$ is the disjoint union
of $\mathcal{F}^\times$ and $\mathcal{M}$. If $K$
is not a field, then $\mathcal{F}$ is a local ring
if and only if $K$ is local.)

\begin{proposition}\label{P:F^times}
$$ \F^\times =\{ f\in\F: f_1\in K^\times\},$$
so the map $f\mapsto f_1$ is a surjective group homomorphism
from $\F^\times\to K^\times$.
\end{proposition}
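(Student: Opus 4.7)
The plan is to prove both inclusions of the asserted equality; the surjectivity of $f \mapsto f_1 : \F^\times \to K^\times$ then follows immediately.

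For the inclusion $\F^\times \subseteq \{f \in \F : f_1 \in K^\times\}$, I would invoke the observation from the preceding paragraph that $f \mapsto f_1$ is a $K$-algebra homomorphism $\F \to K$, hence in particular a ring homomorphism. If $fg = 1$ in $\F$, applying this homomorphism gives $f_1 g_1 = 1$ in $K$, so $f_1 \in K^\times$.

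For the reverse inclusion, suppose $a := f_1 \in K^\times$. Then I can write $f = a(1 - m)$ with $m := 1 - a^{-1} f$, and observe that $m_1 = 0$, so $m \in \M$. I would construct a two-sided inverse of $f$ as the formal geometric series
\[
g := a^{-1}\sum_{k=0}^\infty m^k.
\]
The crux, and the only real obstacle, is verifying that this expression makes sense as a single element of $\F$. The key point is that since $m \in \M$, every element of $\spt(m)$ has degree at least $1$; by induction, using the product formula $(m^k)_p = \sum_{q | p} m_q (m^{k-1})_{p/q}$ from the preceding material, every element of $\spt(m^k)$ has degree at least $k$. Consequently, for a fixed monic monomial $p$ of degree $n$, only the terms with $k \leq n$ contribute to the coefficient of $p$ in the sum, which is therefore a finite expression in $K$, and $g$ is a well-defined element of $\F$.

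To verify $fg = 1$, I would use the telescoping identity $(1 - m)\sum_{k=0}^N m^k = 1 - m^{N+1}$, which is valid in $\F$ by associativity and distributivity, giving $f \cdot a^{-1}\sum_{k=0}^N m^k = 1 - m^{N+1}$. For each monic monomial $p$ of degree $n$, choosing $N \geq n$ makes the $p$-coefficient of the right side equal to $1_p$ (since $m^{N+1}$ is supported in degrees $\geq N+1$), and the same support bound lets me replace the partial sum by $g$ on the left without changing the $p$-coefficient. Hence $(fg)_p = 1_p$ for every $p$, i.e.\ $fg = 1$. The surjectivity onto $K^\times$ is then immediate: each $a \in K^\times$ is the image of the constant series $a \in \F^\times$, whose inverse is the constant series $a^{-1}$.
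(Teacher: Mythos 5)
Your proposal is correct and follows essentially the same route as the paper: both directions are handled identically, with the nontrivial inclusion proved by normalising the constant term and inverting $1-m$ (the paper's $1+h$) via the formal geometric series, justified by the fact that powers of an element of $\M$ are supported in increasing degrees. Your explicit telescoping verification of $fg=1$ is just a slightly more detailed writing-out of the step the paper asserts directly.
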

\begin{proof}
Let $f\in\F$ have $f_1\in K^\times$. Take
$\alpha:= (f_1)^{-1}\in K$.  Then
$\alpha f = 1 + h$, with $h\in\M$, and we may define
$$ k:= 1 - h + h^2 - h^3 +\cdots \in\F, $$
where the sum makes sense because $v(h^r)$ has order
at least $r$ for each $r\in\N$. Then
$k\alpha f=1$, so $f\in\F^\times$.
This proves that
$$ \{ f\in\F: f_1\in K^\times\}\subset \F^\times.$$

The opposite inclusion is clear,
because if $f\in\F^\times$, and $h=f^{-1}$,
then
$$ 1 = (fh)_1 = f_1h_1, $$
so $f_1\in K^\times$.
\end{proof}

\subsection{Formal maps}
By $\M^d$ we denote (as usual) the Cartesian product
$\M\times\cdots\times \M$ of $d$ factors $\M$,
so an element $f\in\M^d$ is a $d$-tuple $(f_1,\ldots,f_d)$,
with each $f_j\in\M$.

The formal composition $f\circ g$ is defined for $f\in\F$ and
$g\in\M^d$, as follows. First, the composition $m\circ g$ of a
monomial $m=x^i$ with $g$ is $g_1^{i_1}\cdots g_d^{i_d}$,
where the products and powers use the multiplication
of the ring $\F$. Then
$$ f\circ g:= \sum_m f_m\cdot (m\circ g). $$
The sum makes sense because for a given monomial $p\in S$,
the coefficient of $p$ in $m\circ g$ is zero except
for a finite number of $m\in S$; in fact it is zero
once the degree of $m$ exceeds the degree of $p$.
Thus the value
$$ (f\circ g)_p = \sum_{m} f_m\cdot (m\circ g)_p $$
is a finite sum in the ring $K$, and makes sense.

We think of elements of $\M^d$ as \emph{formal self-maps
of $K^d$ fixing $0$}.
The formal composition $f\circ g$ is defined for
$f\in\M^d$ and $g\in\M^d$ by
$$ f\circ g := (f_1\circ g,\ldots,f_d\circ g). $$
With this operation, $\M^d$ becomes a semigroup
with identity; the identity is the element
$$\ONE:=(x_1,x_2,\ldots,x_d).$$
We denote the group of invertible elements
of this semigroup by $\G$.

For $g\in\M^d$, we define \emph{the linear part
of $g$} to be the element of $\gl(d,K)$ with
$(i,j)$ entry given by
$$ L(g)_{ij}:= (g_i)_{x_j}, $$
i.e. the coefficient of the first-degree monomial $x_j$
in the $i$-th component $g_i$ of $g$.

The map $L: \M^d\to\gl(d,K)$ is a semigroup homomorphism,
and its restriction $L:= L|\G$ to the invertible maps
is a group homomorphism $L:\G\to\GL(d,K)$.

We say that an element $g\in\M^d$ is \emph{tangent
to the identity} if $L(g)=\ONE$.

\begin{proposition}
Let $g\in\M^d$. Then $g\in G$ if and only if
$L(g)\in\GL(d,K)$.
\end{proposition}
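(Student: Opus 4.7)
The easy direction is essentially already stated in the paragraph before the proposition: if $g\in\G$, then applying the semigroup homomorphism $L$ to $g\circ g^{-1}=\ONE$ gives $L(g)L(g^{-1})=I$, so $L(g)\in\GL(d,K)$. The content is the converse, and my plan has three stages.

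First, I would reduce to the case where $g$ is tangent to the identity. Given $A:=L(g)\in\GL(d,K)$, view $A$ and $A^{-1}$ as the purely linear elements $\hat A,\widehat{A^{-1}}\in\M^d$ whose $i$-th components are $\sum_j A_{ij}x_j$ and $\sum_j (A^{-1})_{ij}x_j$. Unwinding the composition formula, composing two linear formal maps corresponds to multiplying their matrices, so $\hat A$ and $\widehat{A^{-1}}$ are mutually inverse elements of $\G$. It therefore suffices to show $\widehat{A^{-1}}\circ g\in\G$, which has linear part the identity; replacing $g$ by this composition, I may assume $g=\ONE+\phi$ where each component of $\phi\in\M^d$ consists only of terms of degree $\geq 2$.

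Second, I would build a right inverse $h\in\M^d$ by solving $g\circ h=\ONE$, equivalently $h=x-\phi\circ h$, homogeneous-degree by homogeneous-degree. The crucial bookkeeping fact is that if $\mu$ is a monomial of degree $r\geq 2$, then every monomial in $\mu\circ h$ has degree at least $r$, and its degree-$k$ part depends only on the homogeneous components of $h$ of degree strictly less than $k$. Hence the degree-$k$ part of $\phi\circ h$ is determined by the degree-$<k$ parts of $h$, so the equation forces the linear part of $h$ to equal $x$ and determines each higher homogeneous piece uniquely from earlier ones. A parallel induction, exploiting that the degree-$k$ part of $h'\circ g$ equals $h'^{(k)}$ modulo contributions that depend only on $h'^{(<k)}$ and $\phi$, will produce a left inverse $h'$ with $h'\circ g=\ONE$.

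Third, I would conclude via associativity of $\circ$: the chain $h=\ONE\circ h=(h'\circ g)\circ h=h'\circ(g\circ h)=h'\circ\ONE=h'$ exhibits $h$ as a two-sided inverse, placing $g$ in $\G$. The part I expect to require the most care is the degree-counting claim underlying the induction, which must be justified directly from the definition of $\circ$; once that bookkeeping is pinned down, the construction of the one-sided inverses and their identification are essentially forced.
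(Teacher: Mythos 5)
Your proof is correct, and its first two stages coincide with the paper's: the same easy direction via the homomorphism $L$, and the same reduction $g=\hat A\circ(\widehat{A^{-1}}\circ g)$ to the tangent-to-the-identity case. Where you genuinely diverge is in inverting $g'=\ONE+h$ with $L(h)=0$: the paper writes down the single closed-form series $h'=\ONE+\sum_{k\ge1}(-1)^k h\pw{k}$, while you solve $g\circ h=\ONE$ (equivalently $h=\ONE-\phi\circ h$) homogeneous degree by homogeneous degree, run a parallel induction for a left inverse, and identify the two by associativity. Your bookkeeping lemma is the right one and is immediate from the definition of $\circ$: a monomial of degree $r\ge2$ composed with $h$ contributes to degree $k$ only through the homogeneous parts of $h$ of degree at most $k-1$, since the $r\ge2$ factor degrees are each at least $1$ and sum to $k$. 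What your route buys is robustness: formal composition distributes over addition only in the left argument, so the paper's alternating series cannot be checked by naive telescoping, and as literally written it is not the inverse --- in dimension one with $h=x^2$ it gives $x-x^2+x^4-x^8+\cdots$, whereas the inverse of $x+x^2$ is $x-x^2+2x^3-5x^4+\cdots$; the correct closed form is the nested fixed point of $u\mapsto\ONE-h\circ u$, which is exactly what your recursion computes, and your recursion also yields uniqueness of the coefficients for free. What the paper's formula buys (once repaired) is brevity. One small streamlining available to you: your right inverse $h$ is itself tangent to the identity, hence has a right inverse of its own, and the standard monoid argument then upgrades $h$ to a two-sided inverse, so the separate left-inverse induction is optional.
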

\begin{proof}
If $g$ is invertible in $\M^d$, then its inverse
$h$ has $h\circ g=\ONE$, and this implies
that the matrix product $L(g)L(h)$ is the identity
matrix. Thus $L(g)\in\GL(d,K)$.

For the converse, suppose $L(g)$ is an invertible
matrix, with inverse $H$. We can also regard $L(g)$
as an element of $\G$, by setting
$$ L(g)_i = \sum_{j=1}^d L(g)_{ij} x_j. $$
If we regard $H$ in the same way,
then $H$ is the compositional inverse of $L(g)$, and we can write
$g= L(g)\circ H\circ g$, so it suffices to
show that $H\circ g$ is invertible in $\M^d$.
Now $H\circ g$ is tangent to the identity,
so we just have to
show that all $g'\in\M^d$ of the form
$$ g' = \ONE + h, $$
where $L(h)=0$, are invertible.  But it is
straightforward to check that such $g'$
are inverted by
$$ h':= \ONE - h +h\circ h - h\circ h\circ h + h\circ h\circ h\circ h
+\cdots.$$
\end{proof}

We remark that a matrix $T\in\gl(d,K)$ is invertible
in $\gl(d,K)$
if and only if its determinant $\det(T)$ belongs to
$K^\times$.  The condition is necessary because
the map $\det$ sends products in $\gl(d,K)$
to products in $K$, and it is sufficient
because when $\det(T)\in K^\times$ we may use
the usual adjugate-transpose construction to
construct an inverse for $T$.

In order to avoid confusion, we prefer to use the notation
$g\pw{k}$ for the $k$-times repeated composition. Thus
$g\pw{2}= g\circ g$, $g\pw3=g\circ g\circ g$, and so on,
and the formula used in the foregoing proof becomes
$$ (1+h)\pw{-1} = \ONE + \sum_{k=1}^\infty (-1)^k h\pw{k}. $$

\subsection{Main Results}\label{SS:results}
\begin{theorem}\label{T:main-char-zero}
Let $K$ be a field
of characteristic zero,
and suppose $g\in\G$ is tangent to the identity
and not equal to the identity.  Then there
is an injective homomorphism from $(K,+)$
into the centraliser of $g$ in $\G$.
\end{theorem}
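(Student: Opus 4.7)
The plan is to work in characteristic zero via the iterative logarithm of $g$. I will construct a formal one-parameter subgroup containing $g$, indexed by the fraction field $Q:=Q(K)$, and then argue that its restriction to $K\subset Q$ takes values in $\G$.

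First write $g=\ONE+h$, where $h\in(\M^2)^d$ has vanishing linear part (by the tangent-to-identity hypothesis). Passing coefficients to $Q$, the composition operator $g^\ast\colon\F_Q\to\F_Q$ defined by $g^\ast(\phi):=\phi\circ g$ differs from the identity by an operator which strictly raises $\M$-adic order. Hence the series
\[
X := \sum_{n\ge 1}\frac{(-1)^{n-1}}{n}(g^\ast-\mathrm{id})^n
\]
converges in the $\M$-adic topology on operators, and a direct Leibniz-rule check will show that $X$ is a $Q$-derivation of $\F_Q$. For each $t\in Q$, set $g_t^\ast:=\exp(tX)=\sum_{k\ge 0}(tX)^k/k!$; this converges $\M$-adically, is a $Q$-algebra endomorphism of $\F_Q$, and therefore corresponds to a unique formal map $g_t$ with $Q$-coefficients. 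From $\exp(sX)\exp(tX)=\exp((s+t)X)$ one deduces $g_s\circ g_t=g_{s+t}$; in particular $g_1=g$, so $g\circ g_t=g_{1+t}=g_t\circ g$, i.e.\ every $g_t$ commutes with $g$.

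Two things then remain. Injectivity of $t\mapsto g_t$ is straightforward: $g_t=\ONE$ forces $\exp(tX)=\mathrm{id}$, whence $tX=0$; since $g\ne\ONE$ gives $X\ne 0$ and $K$ is an integral domain, $t=0$. The main obstacle will be the \emph{descent from $Q$ to $K$}: showing that for $t\in K$ the coefficients of $g_t$ actually lie in $K$, not merely in $Q$. Each such coefficient is a polynomial function of $t$ with $Q$-coefficients whose values at $n\in\Z$ equal the corresponding coefficients of $g\pw{n}\in\G$, hence automatically lie in $K$; combining this constraint with the numerical-polynomial structure of the flow coefficients (for instance, by expressing them as $K$-linear combinations of binomial coefficients in $t$) and with the characteristic-zero hypothesis is where the real work lies, and it should force $g_t\in\G$ for every $t\in K$, completing the desired embedding $(K,+)\hookrightarrow C_g$.
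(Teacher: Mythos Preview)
Your construction of the flow via the formal logarithm $X=\log(g^\ast)$ and $g_t^\ast=\exp(tX)$ is a legitimate and well-known alternative to the paper's route. The paper instead proves directly (its Theorem~\ref{T:2}) that each coefficient $(g^{\circ k})_m$ is a \emph{sum-function} of $k$, i.e.\ a $K$-linear combination of the polynomials $\rho_j(k)=\binom{k+j}{j}$, and then defines $g^{\circ\alpha}$ by evaluating these polynomials at $\alpha$. Both methods produce the same one-parameter group over the fraction field, and the paper's injectivity argument (look at the Weierstrass-degree homogeneous part, where $L_k(g^{\circ a})=a\,L_k(g)$) is essentially your $tX=0\Rightarrow t=0$. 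The paper's sum-function machinery has the extra payoff that it transfers to positive characteristic, which the log/exp method does not.

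The genuine gap in your proposal is the descent step. You assert that the numerical-polynomial structure ``should force $g_t\in\G$ for every $t\in K$,'' but this is exactly what fails for general $K$. A polynomial in $Q(K)[t]$ that takes $K$-values on $\Z$ need not take $K$-values on all of $K$: the paper itself gives the counterexample $K=\Z[y]$, where $\hat\rho_2(y)\notin K$. Concretely, for $g(x)=x+x^2$ in one variable one computes $(g^{\circ k})_{x^4}=\binom{k}{2}+6\binom{k}{3}$, and at $k=y$ this equals $\tfrac12 y(y-1)+y(y-1)(y-2)\notin\Z[y]$; hence $g^{\circ y}\notin\G_K$. The paper does not attempt this descent: it shows only that the coefficients of $g^{\circ\alpha}$ lie in the $\rho$-intact envelope $K_\rho$ (the smallest overring of $K$ in $\hat K$ closed under all the $\hat\rho_m$), and Corollary~\ref{C:4-1} records the homomorphism as $(K,+)\to C_{\G_{K_\rho}}(g)$, not $C_{\G_K}(g)$. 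Your plan to push all the way down to $K$ cannot succeed in general.
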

\begin{theorem}\label{T:main-char-positive}
Let $K$ be an integral domain with identity
having finite characteristic $c$,
and suppose $g\in\G$ is tangent to the identity
and not of finite order.  Then there
is an homomorphism from $(\Z_c,+)$,
the additive group of the $c$-adic integers,
into the centraliser of $g$ in $\G$, and the
image of this homomorphism is uncountable.
\end{theorem}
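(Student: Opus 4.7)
The plan is to equip the subgroup $H \subset \G$ of maps tangent to the identity with a complete Hausdorff topology making composition continuous, then extend the integer power map $n \mapsto g\pw{n}$ continuously to the $c$-adic completion $\Z_c$ of $\Z$, exploiting the key characteristic-$c$ phenomenon that composing with oneself $c$ times sharply raises the order of tangency to the identity.

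For $k \ge 1$, let $\M^{k+1}$ denote the $(k+1)$-st ideal power, i.e.\ $\{f \in \F : f_m = 0 \text{ whenever } \deg m \le k\}$, and set $U_k := \{f \in H : f_i - x_i \in \M^{k+1},\ i=1,\ldots,d\}$. A routine Taylor-expansion argument shows that each $U_k$ is a normal subgroup of $H$, and the family $(U_k)_{k \ge 1}$ is a neighborhood basis at $\ONE$ for a Hausdorff topology on $H$ in which composition and inversion are continuous. Cauchy sequences converge coefficient by coefficient, so $H$ is a Polish topological group.

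The crux is the lemma: $g \in U_k$ implies $g\pw{c} \in U_{2k}$. Writing $g_i = x_i + h_i$ with $h_i \in \M^{k+1}$, one has $(g\pw{2})_i = x_i + h_i + h_i(x+h)$. For a monomial $m$ of degree $\ell \ge k+1$, the binomial expansion of $m(x+h) = \prod_j (x_j + h_j)^{m_j}$ gives $m(x+h) - m(x) \in \M^{\ell + k}$, since every nonzero extra term replaces at least one $x_j$ factor by some $h_j \in \M^{k+1}$, gaining at least $k$ in degree. Summing over monomials in $h_i$ gives $h_i(x+h) \equiv h_i \pmod{\M^{2k+1}}$, whence induction yields $(g\pw{n})_i \equiv x_i + n h_i \pmod{\M^{2k+1}}$. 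Setting $n = c = 0$ in $K$ gives $g\pw{c} \in U_{2k}$, and iterating, $g\pw{c^n} \in U_{2^n k}$, so $g\pw{c^n} \to \ONE$ in $H$.

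For $a \in \Z_c$ with integer approximants $a_N$ satisfying $a \equiv a_N \pmod{c^N}$, the sequence $(g\pw{a_N})$ is Cauchy in $H$: for $M > N$, the ratio $g\pw{a_M - a_N}$ is an integer power of $g\pw{c^N} \in U_{2^N k}$ and hence lies in $U_{2^N k}$. Define $\phi(a) := \lim_N g\pw{a_N}$. Separate continuity of composition then yields well-definedness (independence of the choice of approximants), the homomorphism property $\phi(a+b) = \phi(a) \circ \phi(b)$, and commutation with $g$; so $\phi: \Z_c \to C_g$. For uncountability, $\phi(\Z_c)$ is a compact abelian subgroup of the Polish group $H$; a countable infinite compact Hausdorff topological group cannot exist (Baire category forces an isolated point, hence by homogeneity a discrete and therefore finite group), so since $g$ has infinite order, $\phi(\Z_c) \supset \langle g \rangle$ must be uncountable. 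The principal obstacle is the key lemma $g \in U_k \Rightarrow g\pw{c} \in U_{2k}$; the rest is formal topology plus Baire category.
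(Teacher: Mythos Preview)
Your proof is correct, and it takes a genuinely different route from the paper's.

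The paper reaches the $c$-adic extension via its sum-function machinery: Theorem~\ref{T:2} shows that each coefficient $(g\pw{k})_m$ is a sum-function of $k$ of degree less than $\deg m$, and then the block-pattern combinatorics of Section~\ref{S:T-proof} (essentially Lucas' theorem for $\binom{m+k}{m}$ modulo $c$) shows that such sum-functions are periodic of period $c^r$ once $\deg m<c^r$. This periodicity is what allows the paper to \emph{define} $P_m(z)$ for $z\in\Z_c$ and hence $g\pw{z}$. Your argument bypasses all of Sections~2 and~3: the single lemma ``$g\in U_k\Rightarrow g\pw{c}\in U_{2k}$'' (proved by a two-line Taylor estimate and the vanishing of $c$ in $K$) already gives $g\pw{c^n}\to\ONE$, and then completeness plus a Cauchy-sequence construction does the rest. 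The final uncountability step is essentially the same in both (compact image, identity not isolated, Baire).

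What each approach buys: yours is decisively more elementary and self-contained for this theorem alone --- no binomial-coefficient analysis, no sum-function algebra. The paper's route gives sharper quantitative information (it yields $g\pw{c^r}\in U_{c^r-1}$, whereas your iteration only gives $g\pw{c^r}\in U_{2^r}$), and, more importantly, the sum-function framework is what unifies the characteristic-zero and characteristic-$c$ cases: the same Theorem~\ref{T:2} feeds directly into Theorem~\ref{T:main-char-zero} via polynomial interpolation. So the paper's longer road is paying for a single engine that drives both main theorems.

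One small inaccuracy worth noting: calling $H$ \emph{Polish} is not right when $K$ is uncountable, since the coefficient-wise topology on $\F$ is then not separable. But you never actually use separability --- your Baire argument only needs that $\phi(\Z_c)$ is compact Hausdorff (indeed compact metric, as a continuous Hausdorff image of $\Z_c$), and that is immediate. So the slip is cosmetic.
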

Combining these, we have our main conclusion:
\begin{theorem}[Main Result]\label{T:main}
Let $K$ be an integral domain with identity,
which is either of finite characteristic or
is an uncountable field. Let $d\in\N$, and let $\G$ be the group
of formal self-maps of $K^d$ fixing zero. 
Then each element of $\G$
tangent to the identity has uncountable
centraliser in $\G$.
\end{theorem}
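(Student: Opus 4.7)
The Main Theorem follows from Theorems~\ref{T:main-char-zero} and~\ref{T:main-char-positive} by a case analysis on the characteristic of $K$ and the order of $g$, with two degenerate subcases handled directly. First I would observe that $\G$ itself is always uncountable under the standing hypotheses: since $K$ has at least two elements, for every sequence $(a_i)_{i\ge 2}$ in $K$ the formal map $(x_1+\sum_{i\ge 2}a_ix_1^i,\,x_2,\ldots,x_d)$ is tangent to the identity and hence lies in $\G$, yielding at least $|K|^{\aleph_0}\ge 2^{\aleph_0}$ distinct elements. Consequently, if $g=\ONE$ then $C_g=\G$ is uncountable and we are finished in that case.

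Assume now $g\neq\ONE$. If $K$ has characteristic zero then the hypothesis of the Main Theorem forces $K$ uncountable, and Theorem~\ref{T:main-char-zero} furnishes an injective homomorphism $(K,+)\hookrightarrow C_g$, so $|C_g|\ge|K|$ is uncountable. If $K$ has characteristic $c>0$ and $g$ has infinite order in $\G$, Theorem~\ref{T:main-char-positive} directly provides a homomorphism $\Z_c\to C_g$ with uncountable image, so $C_g$ is again uncountable.

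The one remaining case is $K$ of characteristic $c>0$ with $g\neq\ONE$ of finite order in $\G$. This is the principal obstacle of the argument, since it is not covered by either of Theorems~\ref{T:main-char-zero} or~\ref{T:main-char-positive}. To dispose of it I would invoke the observation alluded to in the abstract, namely that every finite-order element of $\G$ possesses a large centraliser; this presumably rests on an independent construction that exhibits $g$ inside a naturally occurring uncountable abelian subgroup of $\G$, rather than on the $\Z_c$-flow machinery underlying Theorem~\ref{T:main-char-positive}. Once that supplementary fact is in hand, the three cases above exhaust all possibilities and the Main Theorem is proved.
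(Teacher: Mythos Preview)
Your case analysis is correct and in fact more explicit than the paper's own treatment: the paper simply writes ``Combining these, we have our main conclusion'' immediately after stating Theorems~\ref{T:main-char-zero} and~\ref{T:main-char-positive}, with no further argument. Your handling of the cases $g=\ONE$, characteristic zero with $g\neq\ONE$, and positive characteristic with $g$ of infinite order is exactly how that combination is made precise.

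You are also right that the remaining case --- positive characteristic with $g\neq\ONE$ of finite order --- is \emph{not} covered by either of the two cited theorems, and that some separate input is needed. The paper does not supply a proof of this case in the body either; it is addressed only by the sentence in the abstract that ``elements of finite order always have a large centraliser''. So your deferral to that remark matches precisely what the paper does. In short, your proposal is the paper's approach, spelled out, with the same residual gap explicitly acknowledged rather than glossed over.
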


These results are not new in dimension $d=1$. See
\cite{OFS, Lubin1, Lubin2}. 

In Theorem \ref{T:main-char-zero} it is possible to 
relax the hypothesis that $K$ be a field. 
It works on the weaker hypothesis that $K$ be an
integral domain of characteristic zero having
a certain technical property (called \textit{$\rho$-intactness,}
described in Subsection \ref{SS:rho-intact}
of Section \ref{S:sum-functions} below). For instance,
it works for $K=\Z_p$, the ring of
$p$-adic integers corresponding to any prime $p$,
and for any ring that contains the rational field $\Q$.

\subsection{Wider Context}
We came to this subject because of our interest
in reversibility \cite{OFS}.  We would like to
characterise the reversible elements of well-known
groups.  The power series or formal map groups with real or complex
coefficients are usually considered as a preliminary step
in approaching groups of diffeomorphisms or 
biholomorphic maps.  We would also hope that
progress on the formal map groups will lead to
progress on Riordan-style groups, semidirect
products of a group of invertibles and
a group of automorphims.  The formal maps
we study in the present paper are precisely
the $K$-algebra automorphisms of the
$K$-algebras $\F_d$.  

It is standard procedure, in attempting to
understand reversibility, to start with centralisers
\cite{OFS}.  The results of the present paper
are a first step.  However, it remains to
give a characterisation of the full centraliser
of the maps $g$ we consider.  It is plausible
that in many cases the one-parameter group
we identify will be the full centraliser,
or will have finite index in the centraliser.
We expect this to be the generic situation.
However, if the map $g$ is the direct product
$(h,k)$ of two similar maps in lower dimensions, then
the centraliser of $g$ will have an 
abelian two-parameter subgroup.  We
conjecture that whenever the one-parameter
group we construct here does not have
finite index in the whole centraliser,
the map $g$ is conjugate to a product,
but this conjecture is unproven to date.
We hope that further progess in this direction
might allow us to prove another conjecture,
namely that a reversible formal map can always
be reversed by a map of finite order.

\section{Sum functions}\label{S:sum-functions}
We make no claim to originality for the content
of this section and the next.  These are included for
expository purposes and to set up notation and terminology
for later use.  The mathematical content must
be largely familiar to people who have thought
about these matters.
\subsection{}
Let $K$ be an integral domain with identity. 
We denote the field of fractions of $K$ by $\hat K$, and regard
$K$ as a subring of $\hat K$. 

Let $\pi_K$ denote the group homomorphism from
$(\Z,+)$ into $(K,+)$ such that $\pi_K(1_\Z) = 1_K$.
An induction argument shows that 
it is also a ring homomorphism.
We denote the image $\pi_K(\Z)$ by $\Z_K$. The ring
$\Z_K$ is isomorphic to the quotient ring
$\Z/(c)$, where $c$ is the characteristic of $K$.
If $c>0$, then $\Z_K$ is the \emph{prime field of $K$}.
In characteristic zero, we have $\Q_K:= \hat\Z_K\subset\hat K$
(and $\pi_K$ extends to a field isomorphism from
$\Q$ onto $\Q_K$),
but it may happen that there are 
elements in $\Z_K$ that are noninvertible in $K$.

The \emph{basic sum-functions} with respect to $K$
are the $\rho_m=\rho_{m,K}:\Z_+\to K$, defined inductively for $m\in\Z_+$
by:
$$ \begin{array}{rcl}
\rho_0(k) &:=& 1_K, \forall k\in\Z_+,\\
\rho_{m+1}(k) &:=& \sum_{r=0}^k \rho_m(r), \forall k\in\Z_+,
\end{array}
$$

\begin{definition}
A function $f:\Z_+\to K$ is a \emph{sum-function over $K$}
if it belongs to the $K$-linear span of the basic sum 
functions, i.e. there exist $m\in\Z_+$ and
$\lambda_0$,$\ldots$,$\lambda_m\in K$ such that
$$ f(k) = \sum_{i=0}^m \lambda_i\cdot\rho_i(k), \forall k\in\Z_+.$$
\end{definition}

We denote the set of all sum-functions by $\Sigma_K$,
or just $\Sigma$, when the context is clear.

\subsection{The case $K=\Z$}
It is easy to see that
\begin{equation}\label{E:1}
  \rho_{m,\Z}(k) = \binom{m+k}{m} 
= \binom{m+k}{k}, 
\end{equation}
whenever $m,k\in\Z_+$. 
Thus $\rho_{m,\Z}(k) = p(k)$,
where $p(t)\in\Q[t]$ is a polynomial over $\Q$, having
degree $m$ and leading coefficient $\frac1{m!}$.
Thus each sum function $f\in\Sigma_{\Z}$
coincides on $\Z_+$ with some polynomial $p(t)$ over
$\Q$, and maps $\Z_+$ into $\Z$.  Since $p(t)$
agrees with $f(t)$ on the infinite set $\Z_+$,
if follows that $p$ is uniquely determined by $f$.
We abuse the notation and denote $p(t)$ by
$f(t)$.  For instance, 
$$ \rho_{m,\Z}(t) = \frac{t(t-1)\cdots(t-m+1)}{m!}. $$

According to P\'olya's  definition \cite{Wiki1915}
a polynomial $p(t)\in\Q[t]$ is 
\emph{integer-valued} if 
$p(n)\in\Z$ whenever $n\in\Z$.

\begin{proposition}\label{P:integer-valued-is-sum-function}
The sum-functions over $\Z$ are the same as
 the restrictions to $\Z_+$ of 
the integer-valued polynomials.
\end{proposition}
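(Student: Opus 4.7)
The plan is to establish the two inclusions separately. For the direction $\Sigma_\Z\subseteq\{\text{restrictions of integer-valued polynomials}\}$, I note that by equation~\eqref{E:1}, $\rho_{m,\Z}$ is the restriction to $\Z_+$ of the polynomial $\binom{t+m}{m}=\frac{(t+1)(t+2)\cdots(t+m)}{m!}$, which is integer-valued on all of $\Z$: on $\Z_+$ it is a binomial coefficient; for $-m\le t<0$ one numerator factor vanishes; and for $t<-m$ the value is $\pm\binom{-t-1}{m}$. Since integer-valued polynomials are closed under $\Z$-linear combinations, every $f\in\Sigma_\Z$ extends to such a polynomial.

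For the converse, I will induct on the degree $m$ of the integer-valued polynomial $p(t)\in\Q[t]$. The base case $m=0$ is immediate, as $p=p(0)\in\Z$ forces $p=p(0)\rho_{0,\Z}$. The engine of the induction is the identity
$$ \rho_{m+1,\Z}(t)-\rho_{m+1,\Z}(t-1)=\rho_{m,\Z}(t), $$
which is precisely the defining recursion $\rho_{m+1}(k)=\rho_{m+1}(k-1)+\rho_m(k)$ read at $k\in\Z_{\ge 1}$, and so holds as a polynomial identity. Writing $\nabla f(t):=f(t)-f(t-1)$, it reads $\nabla\rho_{m+1,\Z}=\rho_{m,\Z}$; thus the $\rho_{i,\Z}$ form a sequence of discrete antiderivatives. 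For $p$ of degree $m\ge 1$, the polynomial $\nabla p$ is integer-valued of degree $m-1$, so by hypothesis $\nabla p=\sum_{i=0}^{m-1}\mu_i\rho_{i,\Z}$ with $\mu_i\in\Z$. Setting $q:=\sum_{i=0}^{m-1}\mu_i\rho_{i+1,\Z}$ gives $\nabla(p-q)=0$, so $p-q$ is a constant $c$; evaluating at $t=0$ and using $\rho_{i,\Z}(0)=1$ (a one-line induction from the recursion) shows $c=p(0)-\sum_{i=0}^{m-1}\mu_i\in\Z$. Hence $p=c\rho_{0,\Z}+\sum_{i=0}^{m-1}\mu_i\rho_{i+1,\Z}$ agrees on $\Z_+$ with a sum-function.

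The only real obstacle is spotting the discrete-primitive identity $\nabla\rho_{m+1,\Z}=\rho_{m,\Z}$; once it is in hand, the argument is the discrete analogue of integrating a polynomial of degree $m-1$ up to degree $m$, with the constant of integration pinned down by the requirement that the value at zero land in $\Z$.
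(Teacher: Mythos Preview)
Your proof is correct. The converse direction (integer-valued polynomial $\Rightarrow$ sum-function) is essentially identical to the paper's: both induct on the degree, apply the backward difference $\nabla p(t)=p(t)-p(t-1)$ to drop the degree by one, invoke the inductive hypothesis to write $\nabla p$ as a $\Z$-combination of $\rho_0,\ldots,\rho_{m-1}$, and then use the telescoping/antiderivative identity $\nabla\rho_{j+1}=\rho_j$ to recover $p$ up to an integer constant. Your presentation via the operator $\nabla$ is a bit crisper, but the argument is the same.

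The forward direction differs. You verify directly that each basic polynomial $\binom{t+m}{m}$ is integer-valued on all of $\Z$ by a three-case sign analysis (nonnegative $t$; $-m\le t<0$ where a factor vanishes; $t<-m$ where the value is $(-1)^m\binom{-t-1}{m}$), and then invoke closure of the integer-valued polynomials under $\Z$-linear combinations. The paper instead takes an arbitrary sum-function $f$, writes the corresponding degree-$n$ polynomial via Newton's divided-difference interpolation at the nodes $0,1,\ldots,n$, and observes that each divided difference $f[0,1,\ldots,m]$ is an integer divided by $m!$, while $m!$ divides any product of $m$ consecutive integers; hence $f$ is integral at every negative integer as well. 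Your route is more elementary and self-contained; the paper's route avoids the case split but imports the Newton formula and the arithmetic fact about $m!$, and in exchange exhibits explicitly how the interpolation coefficients sit inside $\frac{1}{m!}\Z$, which foreshadows the later remark that the coefficient of $\rho_n$ in $t^n$ is $1/n!$.
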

\begin{proof} 
Given a sum function $f\in\Sigma_{\Z}$,
let $f(t)\in\Q[t]$ be the corresponding polynomial.
If $f(t)$ has degree $n$, then it is uniquely
determined by the values $f(0)$,$f(1)$,$\ldots$,$f(n)$,
and it can be evaluated at each point $x\in\Q$ by
Newton's interpolation formula:
$$\begin{array}{rcl}
 f(x) &=& f(0) + f[0,1]x +f[0,1,2]x(x-1)+
\cdots
\\
&&\qquad+f[0,1,\ldots,n]x(x-1)\cdots(x-n),
\end{array} $$
where $f[0,1,\ldots,m]$ denotes the usual
divided difference. Observing that  
$f[0,1,\ldots,m]$ takes the form of
some integer divided by $m!$, and recalling that
$m!$ divides any product of $m$ consecutive
positive integers, we see that $f(x)\in\Z$
whenever $x$ is a negative integer. Thus
$f(t)$ is an integer-valued polynomial.

\smallskip
For the converse, suppose $p(t)$ is an
integer-valued polynomial.  We wish to see that
$p|\Z_+$ is a sum-function.

If $\deg(p)=0$, then $p=p(0)$
is an integral multiple of $\rho_0$. 
Proceeding inductively, suppose
$m\in\N$ and we are given that each polynomial over $\Q$ of
degree less than $m$ that maps $\Z$ into $\Z$
gives a $\Z$-linear 
combination of 
$\rho_0$,$\ldots$,$\rho_{m-1}$. 
Fix $p(t)\in\Q[t]$ of degree $m$,
and suppose it maps $\Z$ into $\Z$. Then
$q(t):= p(t)-p(t-1)$ belongs to $\Q[t]$,
has degree $m-1$, and maps $\Z$ into $\Z$, so there
exist $\lambda_j\in \Z$ such that
$$ p(k) - p(k-1) 
= \sum_{j=0}^{m-1} \lambda_j\cdot\rho_j(k),$$
whenever $k\in\Z_+$. Thus for $k\in\Z_+$ we have
$$ p(k) = \sum_{r=1}^k \sum_{j=0}^{m-1}\lambda_j\cdot\rho_j(k) + p(0), $$
$$ = \sum_{j=1}^m \lambda_{j-1}\cdot\rho_j(k) + p(0)\rho_0(k). $$
So $p$ is a sum-function, a $\Z$-linear
combination of 
$\rho_0$,$\ldots$,$\rho_m$.
\end{proof}

In particular, for each polynomial $p(t)\in\Z[t]$,
the restriction $p|\Z_+$ is a sum-function over
$\Z$.  

\begin{corollary}\label{C:k^n-is-sum-function}
	For each $n\in\Z_+$, the function
	$k\mapsto k^n$ belongs to $\Sigma_{\Z}$.
\qed
\end{corollary}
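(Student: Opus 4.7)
The plan is to invoke Proposition \ref{P:integer-valued-is-sum-function} directly. The polynomial $t\mapsto t^n$ has integer coefficients, so it certainly sends $\Z$ into $\Z$; by P\'olya's definition (recalled just before that proposition) it is therefore an integer-valued polynomial. Its restriction to $\Z_+$ is precisely the function $k\mapsto k^n$, and the proposition asserts that every such restriction lies in $\Sigma_{\Z}$. This gives the corollary in one line.

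I do not anticipate any obstacle, because the substantive work, namely the characterisation of sum-functions over $\Z$ as the restrictions of integer-valued polynomials, has already been carried out; the corollary is a single-step application. If one wanted an explicit expansion of $k^n$ as a $\Z$-linear combination of $\rho_0,\ldots,\rho_n$, one could instead follow the inductive machinery inside the proof of Proposition \ref{P:integer-valued-is-sum-function}: form the forward difference $k^n-(k-1)^n$, which is again an integer-coefficient polynomial of degree $n-1$, apply the inductive hypothesis to express it as a $\Z$-linear combination of $\rho_0,\ldots,\rho_{n-1}$, and then use the recurrence $\rho_{m+1}(k)=\sum_{r=0}^k\rho_m(r)$ to integrate termwise and bump each index up by one. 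But for the bare statement of the corollary this explicit computation is unnecessary.
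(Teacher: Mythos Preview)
Your proposal is correct and matches the paper's approach exactly: the corollary is stated with a \qed\ immediately after it, indicating it follows at once from Proposition~\ref{P:integer-valued-is-sum-function}, precisely as you argue. Your optional explicit expansion via forward differences is also sound but, as you note, unnecessary for the bare statement.
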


From the proof of Proposition
\ref{P:integer-valued-is-sum-function} we conclude:
\begin{corollary}
The basic sum-functions over $\Z$
form a basis for the $\Z$-module (free abelian group)
of all integer-valued polynomials.  Moreover, 
for each $n\in\Z_+$, the subspace of integer-valued polynomials of
degree at most $n$ is the span of the first
$n+1$ basic sum-functions.
\qed\end{corollary}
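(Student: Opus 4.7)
My plan is to extract the corollary directly from the inductive construction used in the proof of Proposition \ref{P:integer-valued-is-sum-function}, together with the observation that each $\rho_{m,\Z}$ has strictly higher degree than its predecessors.

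First, to establish spanning, I would just point back to the inductive step in the proof of Proposition \ref{P:integer-valued-is-sum-function}. That step takes an integer-valued polynomial $p(t)$ of degree $m$, forms $q(t):=p(t)-p(t-1)$ of degree $m-1$ which is still integer-valued, obtains by induction integers $\lambda_0,\ldots,\lambda_{m-1}$ with $q(k)=\sum_{j<m}\lambda_j\rho_j(k)$, and then telescopes to express $p(k)$ as $p(0)\rho_0(k)+\sum_{j=1}^{m}\lambda_{j-1}\rho_j(k)$. In particular, the expression involves only $\rho_0,\ldots,\rho_m$, so if $\deg p\le n$ then $p$ lies in the $\Z$-span of the first $n+1$ basic sum-functions.

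Next, the reverse inclusion for the degree statement is immediate: since $\rho_{j,\Z}(t)=t(t-1)\cdots(t-j+1)/j!$ has degree exactly $j$, every $\Z$-linear combination $\sum_{j=0}^n\lambda_j\rho_j$ is a polynomial of degree at most $n$ and, by Proposition \ref{P:integer-valued-is-sum-function}, integer-valued.

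Finally, for $\Z$-linear independence, suppose $\sum_{j=0}^n\lambda_j\rho_j=0$ as a polynomial identity over $\Q$, with not all $\lambda_j$ zero. Let $N$ be the largest index with $\lambda_N\ne0$. Then the leading coefficient of the sum is $\lambda_N/N!$, which is nonzero in $\Q$, contradicting the assumed identity. Hence the basic sum-functions form a $\Z$-basis, and the truncated families $\rho_0,\ldots,\rho_n$ are bases for the integer-valued polynomials of degree at most $n$. I do not anticipate any real obstacle here: the corollary is essentially a bookkeeping statement that records what the inductive proof has already shown, together with the trivial degree-based independence argument.
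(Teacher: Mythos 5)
Your proposal is correct and follows essentially the same route as the paper, which derives the corollary by reading the spanning statement (with degree bookkeeping) off the inductive step in the proof of Proposition \ref{P:integer-valued-is-sum-function} and takes $\Z$-linear independence as immediate from the fact that the $\rho_{j,\Z}$ have distinct degrees. Your explicit leading-coefficient argument for independence just fills in a detail the paper leaves implicit.
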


Using equation \eqref{E:1}, one sees that 
the coefficient of $\rho_n$ in the expression
of $t^n$ as an integral combination of
basic sum-functions is $\frac1{n!}$.

We define the \emph{degree} of a sum-function over $\Z$
to be the degree of the corresponding integer-valued polynomial.

\begin{corollary}\label{C:Sigma_Z-ring}
$\Sigma_{\Z}$ forms a ring under pointwise operations.
\end{corollary}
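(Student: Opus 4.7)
The plan is to exploit the identification established by Proposition \ref{P:integer-valued-is-sum-function}: $\Sigma_\Z$ consists exactly of the restrictions to $\Z_+$ of integer-valued polynomials $p(t)\in\Q[t]$. Since the restriction map $p(t)\mapsto p|\Z_+$ is injective (as remarked earlier, an element of $\Q[t]$ is determined by its values on any infinite set), it identifies $\Sigma_\Z$ with the set $I(\Z)$ of integer-valued polynomials in $\Q[t]$, and carries pointwise operations on $\Z_+$ to the usual polynomial operations in $\Q[t]$.

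So it suffices to check that $I(\Z)$ is a subring of $\Q[t]$. The constant polynomial $1$ lies in $I(\Z)$, and closure under addition and subtraction is immediate since if $p,q\in\Q[t]$ both map $\Z$ into $\Z$ then so do $p+q$ and $p-q$. For closure under multiplication, if $p,q\in I(\Z)$ then for each $n\in\Z$ we have $(pq)(n)=p(n)q(n)\in\Z\cdot\Z\subset\Z$, so $pq\in I(\Z)$.

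Transporting back via the identification, the sum and product in $\Sigma_\Z$ (computed pointwise on $\Z_+$) correspond to sum and product of the associated integer-valued polynomials, and the resulting function on $\Z_+$ is again a sum-function by Proposition \ref{P:integer-valued-is-sum-function}. Hence $\Sigma_\Z$ is closed under pointwise addition and multiplication, contains $\rho_0\equiv 1$, and is closed under additive inverses, so it is a commutative ring with identity. There is no real obstacle here; the only point worth noting is the need to pass through the polynomial representatives, since it is not obvious directly from the inductive definition of the $\rho_m$ that a product $\rho_m\cdot \rho_n$ lies in the $\Z$-span of the basic sum-functions.
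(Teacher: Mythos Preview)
Your proof is correct and follows essentially the same approach as the paper: use Proposition~\ref{P:integer-valued-is-sum-function} to identify $\Sigma_\Z$ with the integer-valued polynomials, then observe that the latter are obviously closed under pointwise products and sums. The paper's version is simply terser, omitting the explicit check of the subring axioms that you spell out.
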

\begin{proof}
The pointwise product of two integer-valued
polynomials is obviously an integer-valued polynomial.
Thus $\Sigma_{\Z}$ is closed under pointwise
products, as well as pointwise sums. 
\end{proof}

We remark that the proof is easily modified to show
that a given polynomial of degree $n$
(over any field of characteristic
zero containing $\Z$) maps $\Z\to\Z$ as soon as
it maps any $n+1$ consecutive integers into $\Z$. 

\subsection{General $K$}
Now consider an arbitrary  integral domain $K$ with identity.

\begin{proposition}\label{P:Sigma_K-description}
	(1) $\Sigma_K $
is the $K$-linear span of the maps
$$ k\mapsto \pi_K\left(\binom{m+k}{m} \right). $$
	\\(2)
	$\Sigma_K $
is also the $K$-linear span $K\cdot (\pi_K\circ \Sigma_{\Z})$
of the set
$$ \{ \pi_K\circ (p|\Z_+): p(t)
\textup{ is an integer-valued polynomial}\}.
$$\\
	(3) 
	$\Sigma_K $
	contains the set $K[t]\circ\pi_K$ of all the maps
$$ k\mapsto p(\pi_K(k)), \ (p(t)\in K[t]). $$
\end{proposition}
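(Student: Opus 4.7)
The plan is to show that the basic sum-functions over $K$ are exactly the push-forwards by $\pi_K$ of the basic sum-functions over $\Z$, and then read off the three assertions from the results already proved for $\Sigma_{\Z}$.

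First I would establish the identity
$$ \rho_{m,K}(k) = \pi_K(\rho_{m,\Z}(k)),\qquad \forall m,k\in\Z_+,$$
by induction on $m$. The base case $m=0$ is immediate because both sides equal $1_K$. For the inductive step, since $\pi_K$ is a group homomorphism from $(\Z,+)$ to $(K,+)$, it commutes with the finite sum in the defining recursion, giving
$$ \rho_{m+1,K}(k) = \sum_{r=0}^k \rho_{m,K}(r) = \pi_K\!\Bigl(\sum_{r=0}^k \rho_{m,\Z}(r)\Bigr) = \pi_K(\rho_{m+1,\Z}(k)).$$
Combining this with equation \eqref{E:1} yields $\rho_{m,K}(k)=\pi_K\binom{m+k}{m}$, and part (1) follows by applying $\pi_K$ termwise in the defining $K$-linear combination (noting that the scalars in the combination come from $K$, not $\Z$).

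For part (2), the Newton-interpolation argument in the proof of Proposition \ref{P:integer-valued-is-sum-function} shows that $\Sigma_{\Z}$ is precisely the $\Z$-span of the $\rho_{m,\Z}$. Pushing forward by $\pi_K$ and then taking $K$-linear combinations therefore produces exactly the $K$-span of the functions $k\mapsto\pi_K(\rho_{m,\Z}(k))$, which by part (1) is $\Sigma_K$. This gives both inclusions in (2).

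For part (3), I would use that $\pi_K:\Z\to K$ is a ring homomorphism, so for any $n\in\Z_+$ and $k\in\Z_+$ we have $\pi_K(k^n) = \pi_K(k)^n$. By Corollary \ref{C:k^n-is-sum-function}, $k\mapsto k^n$ lies in $\Sigma_{\Z}$, so $k\mapsto\pi_K(k)^n$ lies in $\pi_K\circ\Sigma_{\Z}\subset\Sigma_K$ by (2). Taking $K$-linear combinations of these monomial-maps produces every map $k\mapsto p(\pi_K(k))$ with $p\in K[t]$, yielding the stated inclusion. The argument is essentially routine once the identity $\rho_{m,K}=\pi_K\circ\rho_{m,\Z}$ is in hand; the only subtle point is keeping straight the difference between $K$-scalars and $\Z_K$-scalars, which is why the span in (2) is genuinely a $K$-span rather than merely a $\Z_K$-span of $\pi_K\circ\Sigma_{\Z}$.
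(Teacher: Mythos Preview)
Your proof is correct and follows essentially the same route as the paper: you establish $\rho_{m,K}=\pi_K\circ\rho_{m,\Z}$ (which the paper simply asserts as ``obvious'') and then deduce (1) from equation~\eqref{E:1}, (2) from Proposition~\ref{P:integer-valued-is-sum-function}, and (3) from Corollary~\ref{C:k^n-is-sum-function}. The only difference is that you spell out the induction for the key identity and the ring-homomorphism step $\pi_K(k^n)=\pi_K(k)^n$ explicitly, whereas the paper leaves these implicit.
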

\begin{proof}
	Obviously, 
$\rho_{m,K} = \pi_K\circ\rho_{m,\Z}$.
so part (1) follows at once from equation \eqref{E:1}.

	(2) Follows from Proposition 
\ref{P:integer-valued-is-sum-function}.
  
	(3) Follows from Corollary 
\ref{C:k^n-is-sum-function}.
\end{proof}

\begin{corollary}\label{C:Sigma_K-is-algebra}
The set $\Sigma_K$ of sum-functions is a $K$-algebra of
functions from $\Z_+\to K$, when  
equipped with pointwise operations
of addition, multiplication and scalar
multiplication.
\end{corollary}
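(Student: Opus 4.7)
The plan is to derive the corollary quickly from the two ingredients already established: Proposition \ref{P:Sigma_K-description}(2), which tells us that $\Sigma_K = K\cdot(\pi_K\circ\Sigma_{\Z})$, and Corollary \ref{C:Sigma_Z-ring}, which tells us that $\Sigma_{\Z}$ is closed under pointwise products. The fact that $\pi_K:\Z\to K$ is a ring homomorphism (noted at the start of Section 2) then lets us transfer the multiplicative closure from $\Sigma_{\Z}$ across to $\pi_K\circ\Sigma_{\Z}$, and finally a routine bilinearity argument passes it to the $K$-linear span.

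First I would dispose of the easy part: closure of $\Sigma_K$ under pointwise addition and under scalar multiplication by elements of $K$ is immediate from the definition, since $\Sigma_K$ is literally a $K$-linear span. So the whole content of the corollary is closure under pointwise product.

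Next I would take arbitrary $f,g\in\Sigma_K$ and use Proposition \ref{P:Sigma_K-description}(2) to write
$$ f = \sum_{i} \lambda_i\cdot(\pi_K\circ\tilde f_i),\qquad g=\sum_{j}\mu_j\cdot(\pi_K\circ\tilde g_j),$$
with finitely many nonzero terms, $\lambda_i,\mu_j\in K$, and $\tilde f_i,\tilde g_j\in\Sigma_{\Z}$. Expanding the pointwise product and using that $\pi_K$ is multiplicative on $\Z$, one gets
$$ fg = \sum_{i,j}\lambda_i\mu_j\cdot\pi_K\circ(\tilde f_i\cdot\tilde g_j).$$
By Corollary \ref{C:Sigma_Z-ring}, each $\tilde f_i\tilde g_j$ lies in $\Sigma_{\Z}$, so each summand lies in $K\cdot(\pi_K\circ\Sigma_{\Z})=\Sigma_K$, and hence $fg\in\Sigma_K$.

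I do not anticipate any real obstacle: the only non-trivial ingredient is Corollary \ref{C:Sigma_Z-ring}, which is already in hand, and the rest is essentially the general fact that the linear span of a multiplicatively closed subset of a commutative $K$-algebra of functions is a subalgebra. The one small point worth highlighting in the writeup is the use of the identity $\pi_K(ab)=\pi_K(a)\pi_K(b)$ pointwise in $\Z_+$, which is what allows the product $(\pi_K\circ\tilde f_i)(\pi_K\circ\tilde g_j)$ to be rewritten as $\pi_K\circ(\tilde f_i\tilde g_j)$.
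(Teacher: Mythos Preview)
Your proof is correct and follows exactly the approach the paper takes: the paper's own proof simply cites Corollary \ref{C:Sigma_Z-ring} and part (2) of Proposition \ref{P:Sigma_K-description}, and you have spelled out the implicit details (multiplicativity of $\pi_K$ and bilinearity of the pointwise product) that make those two ingredients yield the result.
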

\begin{proof}
This follows from Corollary \ref{C:Sigma_Z-ring}
and part (2) of Proposition \ref{P:Sigma_K-description}.
\end{proof}

From part (1) of Proposition \ref{P:Sigma_K-description}
we deduce:
\begin{proposition}\label{C:coefficient}
The pointwise product of $\rho_m$ and $\rho_n$ is
a $\Z_K$-linear combination of $\rho_j$'s, where $j$
ranges from $0$ to $m+n$,
and the coefficient of $\rho_{m+n}$
is $\pi_K(\binom{m+n}{n})$. 
\qed
\end{proposition}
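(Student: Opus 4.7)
The plan is to reduce the statement to the case $K=\Z$, where it becomes a polynomial identity in $\Q[t]$, and then transport the result via the ring homomorphism $\pi_K$. Since $\rho_{m,K}=\pi_K\circ\rho_{m,\Z}$ by part~(1) of Proposition~\ref{P:Sigma_K-description}, and $\pi_K$ is multiplicative, one has
$$ \rho_{m,K}(k)\rho_{n,K}(k) = \pi_K\bigl(\rho_{m,\Z}(k)\rho_{n,\Z}(k)\bigr) $$
for every $k\in\Z_+$. Thus it suffices to expand the integer-valued product $\rho_{m,\Z}\rho_{n,\Z}$ as a $\Z$-linear combination of basic sum-functions, with the correct degree bound and top coefficient, and then apply $\pi_K$ term-by-term.

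The product $\rho_{m,\Z}\rho_{n,\Z}$ is integer-valued by Corollary~\ref{C:Sigma_Z-ring}, and the corresponding polynomial has degree exactly $m+n$. By the corollary immediately following Proposition~\ref{P:integer-valued-is-sum-function}, the integer-valued polynomials of degree at most $m+n$ are precisely the $\Z$-span of $\rho_{0,\Z},\ldots,\rho_{m+n,\Z}$, so there exist unique integers $a_0,\ldots,a_{m+n}$ with
$$ \rho_{m,\Z}(k)\rho_{n,\Z}(k) = \sum_{j=0}^{m+n} a_j\,\rho_{j,\Z}(k),\quad k\in\Z_+. $$
Applying $\pi_K$ to this identity yields the asserted $\Z_K$-linear expansion, with coefficients $\pi_K(a_j)\in\Z_K$.

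It remains to identify $a_{m+n}$, which I would do by comparing leading coefficients of the two sides as polynomials in $\Q[t]$. As noted after equation~\eqref{E:1}, $\rho_{j,\Z}(t)=t(t-1)\cdots(t-j+1)/j!$ has degree $j$ and leading coefficient $1/j!$. Hence the left-hand product has leading coefficient $1/(m!\,n!)$ while the right-hand side has leading coefficient $a_{m+n}/(m+n)!$, giving $a_{m+n}=\binom{m+n}{n}$ and therefore $\pi_K(a_{m+n})=\pi_K\binom{m+n}{n}$, as required. The argument is essentially routine; the only point demanding care is invoking the correct corollary to bound the expansion at degree $m+n$, so that the integer $a_{m+n}$ is unambiguously the leading coefficient.
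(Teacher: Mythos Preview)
Your proof is correct and follows essentially the same route the paper has in mind: the paper simply writes ``From part~(1) of Proposition~\ref{P:Sigma_K-description} we deduce'' and leaves the reader to do exactly what you spelled out --- reduce to $K=\Z$ via $\rho_{m,K}=\pi_K\circ\rho_{m,\Z}$, expand the integer-valued product in the $\Z$-basis $\rho_{0,\Z},\ldots,\rho_{m+n,\Z}$, and read off the top coefficient from the leading term in $\Q[t]$. One small remark: the explicit formula $\rho_{j,\Z}(t)=t(t-1)\cdots(t-j+1)/j!$ you quote from the paper is actually off by a shift (it should be $(t+1)\cdots(t+j)/j!$, since $\rho_j(0)=1$), but this does not affect your argument, which only uses the degree and leading coefficient.
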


We note that the latter coefficient may be zero,
depending on the characteristic of $K$.

\subsection{Characteristic zero}\label{SS:rho-intact}
If $K$ has characteristic $0$, then the basic sum-functions
are linearly-independent over $\hat K$ (when considered
as functions from $\Z_+$ into $\hat K$), and each
element $f$ of $\Sigma_K$ takes the form
$p\circ\pi_K$, where $p(t)\in\hat K[t]$ is a polynomial
over $\hat K$, and indeed $p(t)$ has its coefficients
in the product ring $K\cdot\Q_K$. The polynomial
$p$ is uniquely-determined by $f$, and we denote it by
$\hat f$.

Indeed, the polynomial $\hat\rho_{m,K}$ has degree $m$,
and a nontrivial $\hat K$-linear relationship between the $\rho_{m,K}$
would entail a $\hat K$-linear relationship between
the $\hat\rho_{m.K}$, and that cannot occur between
polynomials having distinct degrees.

Hence each $f\in\Sigma_K$ has a \emph{unique}
expression as a $K$-linear combination of basic
sum-functions, and we may define the \emph{degree}
of $f$ to be the least $m\ge0$ such that the coefficient
of $\rho_{n,K}$ is zero for all $n>m$.  This is
then the same as the degree of the polynomial $\hat f$.

\begin{proposition}\label{P:K-K-is-sum-function}
Let $K$ have characteristic zero, and let $p(t)\in\hat K[t]$.
If $p(a)\in K$ whenever $a\in K$, then
$p\circ\pi_K$ is a sum-function over $K$.
\end{proposition}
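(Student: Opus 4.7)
The plan is to expand $p(t)$ in the $\hat K$-basis $\{\binom{t}{m}:0\le m\le n\}$ of the subspace of $\hat K[t]$ consisting of polynomials of degree at most $n:=\deg p$, where $\binom{t}{m}:=t(t-1)\cdots(t-m+1)/m!$, and then reduce to the case $K=\Z$ already handled by Proposition \ref{P:integer-valued-is-sum-function}. The main obstacle will be verifying that the expansion coefficients, a priori in $\hat K$, in fact lie in $K$; this should follow from the Newton forward-difference formula together with the standing hypothesis $p(K)\subset K$.

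Concretely, I write $p(t)=\sum_{m=0}^n c_m\binom{t}{m}$ with $c_m\in\hat K$ and evaluate successively at $t=\pi_K(0),\pi_K(1),\ldots,\pi_K(n)$. Because $\binom{\pi_K(j)}{m}=0$ for $0\le j<m$ (the factor $\pi_K(j)-\pi_K(j)$ vanishes in the product) and $\binom{\pi_K(m)}{m}=1$, the resulting system is lower-triangular with unit diagonal, and inversion gives the Newton formula
$$ c_m = \sum_{j=0}^m (-1)^{m-j}\binom{m}{j}\,p(\pi_K(j)). $$
Each $p(\pi_K(j))$ lies in $K$ by hypothesis, and the integer coefficients act on $K$ through $\pi_K$, so every $c_m$ belongs to $K$.

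Next, for $k\in\Z_+$, since $\pi_K$ is a ring homomorphism and $m!$ is a nonzero (hence invertible) element of $\hat K$, a short calculation gives $\binom{\pi_K(k)}{m}=\pi_K\bigl(\binom{k}{m}\bigr)$, so
$$ p(\pi_K(k)) = \sum_{m=0}^n c_m\,\pi_K\!\left(\binom{k}{m}\right). $$
The function $k\mapsto\binom{k}{m}$ is the restriction to $\Z_+$ of an integer-valued polynomial, so by Proposition \ref{P:integer-valued-is-sum-function} it lies in $\Sigma_{\Z}$, and hence each summand on the right belongs to $K\cdot(\pi_K\circ\Sigma_{\Z})$. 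By part (2) of Proposition \ref{P:Sigma_K-description}, that $K$-linear span is exactly $\Sigma_K$, and the proposition follows.
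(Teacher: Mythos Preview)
Your argument is correct, but it takes a different route from the paper's. The paper re-runs the induction used for Proposition~\ref{P:integer-valued-is-sum-function} over $K$: assuming the result for degrees below $m$, it applies the hypothesis to $q(t):=p(t)-p(t-1)$ (which again maps $K$ into $K$), obtains that $(p\circ\pi_K)(k)-(p\circ\pi_K)(k-1)$ is a $K$-linear combination of $\rho_0,\ldots,\rho_{m-1}$, and then sums. You instead unwind that recursion into its closed form: expanding $p$ in the Newton basis $\binom{t}{m}$, identifying the coefficients as iterated forward differences $c_m=\sum_{j=0}^m(-1)^{m-j}\binom{m}{j}p(\pi_K(j))\in K$, and then invoking Propositions~\ref{P:integer-valued-is-sum-function} and~\ref{P:Sigma_K-description}(2) directly. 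The two arguments are close cousins, but yours buys an explicit formula for the coefficients and reduces to the already-established integer case rather than repeating its proof; the paper's version is slightly more self-contained in that it does not rely on knowing the inverse binomial transform.
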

\begin{proof}
Suppose $p$ maps $K$ into $K$.
If $\deg(p)=0$, then $p\circ\pi=p(0)\rho_0$
is a sum function of degree $0$. 
Proceeding inductively, suppose
$m\in\N$ and we are given that each polynomial over $\hat K$ of
degree less than $m$ that maps $K$ into $K$
gives a sum-function. Fix $p(t)\in\hat K[t]$ of degree $m$,
and suppose it maps $K$ into $K$. Then
$q(t):= p(t)-p(t-1)$ belongs to $\hat K(t)$,
has degree $m-1$, and maps $K$ into $K$, so there
exist $\lambda_j\in K$ such that
$$ (p\circ\pi)(k) - (p\circ\pi)(k-1) 
= \sum_{j=0}^{m-1} \lambda_j\cdot\rho_j(k),$$
whenever $k\in\Z_+$. Thus for $k\in\Z_+$ we have
$$ (p\circ\pi)(k) = \sum_{r=1}^k \sum_{j=0}^{m-1}\lambda_j\cdot\rho_j(k) + p(0), $$
$$ = \sum_{j=1}^m \lambda_{j-1}\cdot\rho_j(k) + p(0)\rho_0(k). $$
So $p\circ\pi$ is a sum-function.
\end{proof}

A simple-minded converse to Proposition 
\ref{P:K-K-is-sum-function} would say that
if $h$ is a sum-function, then $\hat h$ maps
$K$ into $K$. But this is not
true, in general. For instance, taking
$K:=\Z[y]$ for an indeterminate $y$, the
polynomial corresponding to the basic sum-function
$\rho_2$ is $\hat\rho_2(t)=\half t(t+1)$, and yet
$\hat\rho_2(y) = \half y(y+1)$ does not belong to
$K$. It does not help to assume $K$, or even
$(K,+)$, finitely-generated,
because the same $\hat\rho_2$ does not
map the ring of Gaussian integers into itself.

Prompted by this, let us say that an integral
domain $K$ is 
\emph{$\rho$-intact} if 
$K$ has characteristic zero and
$$ \hat\rho_m(a) \in K,\ \forall a\in K\ \forall m\in\Z_+.$$

Examples  of $\rho$-intact domains are $\Z$,
the ring $\Z_p$ of $p$-adic integers
corresponding to a prime $p$, and all fields
of characteristic zero.
It is also easy to see that if
$K$ is any integral domain of characteristic zero,
then $K\Q_K$ and $\hat K$ are $\rho$-intact. 
(In the case of $K\Q_K$, you could use the
fact that the product of each $m$-term
arithmetic progression of integers is
divisible by $m!$.)
define \emph{the $\rho$-intact-envelope of $K$}
to be
$$ K_\rho:= \bigcap\{ L: K\subset L,
L\textup{ is a $\rho$-intact subring of }\hat K\}.
$$
Then $K_\rho$ is $\rho$-intact, and
$K\subset K_\rho\subset K\Q_K$.
Of course, if $K$ is $\rho$-intact, then
$K_\rho=K$.

With this terminology, we have, trivially:
\begin{proposition}
If $K$ is a domain of characteristic zero, then
$\hat h(t)\in(K\Q_K)[t]$ maps $K_\rho$ into $K_\rho$
for each sum-function $h\in\Sigma_K$.
\qed
\end{proposition}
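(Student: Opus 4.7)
The statement is essentially unwinding definitions, which matches the word ``trivially'' in the paper. My plan is to reduce everything to the definition of $\rho$-intact applied to $K_\rho$ itself.

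First I would recall from the preceding discussion that, since $K$ has characteristic zero, every $h\in\Sigma_K$ admits a unique expression $h=\sum_{i=0}^n\lambda_i\rho_{i,K}$ with $\lambda_i\in K$, and that the associated polynomial is $\hat h(t)=\sum_{i=0}^n\lambda_i\hat\rho_{i,K}(t)$. Each $\hat\rho_{i,K}(t)$ has the closed form $t(t-1)\cdots(t-i+1)/i!\in\Q_K[t]$, so $\hat h(t)\in(K\Q_K)[t]$, which takes care of the parenthetical assertion about the coefficients.

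Next, I would fix $a\in K_\rho$ and compute $\hat h(a)=\sum_{i=0}^n\lambda_i\,\hat\rho_{i,K}(a)$. By the defining property of $\rho$-intactness applied to the ring $K_\rho$, each $\hat\rho_{i,K}(a)$ lies in $K_\rho$. Since $K\subset K_\rho$ (by construction of the envelope), the scalars $\lambda_i$ also lie in $K_\rho$, and because $K_\rho$ is a ring, the finite sum $\sum\lambda_i\hat\rho_{i,K}(a)$ lies in $K_\rho$. Therefore $\hat h(a)\in K_\rho$, as required.

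There is no real obstacle here; the only thing worth being careful about is that the assertion uses $\hat\rho_{m,K}$ rather than the original $\rho_{m,K}$, so one must note that the definition of $\rho$-intact in the paper is phrased precisely in terms of $\hat\rho_m$ evaluated at arbitrary elements of the ring, which is exactly what is needed. The proposition is therefore immediate from the definitions together with the uniqueness of the expansion of $h$ in the basic sum-functions (which was established just before, using characteristic zero to guarantee linear independence of the $\hat\rho_{m,K}$).
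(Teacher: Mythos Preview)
Your proposal is correct and matches the paper's approach exactly: the paper marks this proposition with an immediate \qed after introducing it with the word ``trivially,'' and what you have written is precisely the unwinding of definitions that justifies that word. The only step to check is that $K_\rho$ is itself $\rho$-intact (so that $\hat\rho_m(a)\in K_\rho$ for $a\in K_\rho$), which the paper asserts just before the proposition and which follows immediately from $K_\rho$ being an intersection of $\rho$-intact rings.
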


\begin{remark*}
	This account of sum-functions in characteristic zero $K$
	could be recast in the language of 
	Elliott \cite{Elliott}. The $\rho$-intact integral
	domains of characteristic zero are torsion-free examples of
 \textit{binomial rings}, a concept useful in group
	theory that goes back to
	Philip Hall \cite{Hall} (or \cite{Hall+}), and has a substantial literature.
The $\rho$-intact envelope of a $K$ of characteristic zero
	is the same as the binomial
	ring
	$\textup{Bin}^U(K)$ 
	of theorem 7 in \cite{Elliott}, where the
	functor $\textup{Bin}^U$ from the category of 
	commutative rings
	with identity to the category of binomial rings 
	is a left adjoint of the forgetful functor
	from binomial rings to rings.
\end{remark*}

\subsection{Positive characteristic}
In positive characteristic $c$ some sum-functions are not given by 
polynomials
at all.  
All polynomial functions $P\circ \pi_K$, with $P(t)\in K[t]$
have period $c$, because $\pi_K(k+c) = \pi_K(k)$ for
$k\in\Z_+$. But
in characteristic $2$, for instance, 
the sequences $(\rho_m(k))_k$ 
for $m=0$,$1$,$2$,$3$,$4$ begin:
\begin{center}
$1,1,1,1,1,1,1,1,\ldots,$
\\$1,0,1,0,1,0,1,0,\ldots,$
\\$1,1,0,0,1,1,0,0,\ldots,$
\\$1,0,0,0,1,0,0,0,\ldots,$
\\$1,1,1,1,0,0,0,0,\ldots,$
\end{center}
and have periods $1,2,4,4$ and $8$, respectively.

In fact, all sum-functions are periodic in finite-characteristic,
and more:
\begin{theorem}\label{T:sum-fn}
Let the integral domain $K$ have characteristic $c>0$.
Then for each $r\in\Z_+$, 
the set $\{\rho_m: 0\le m< c^r\}$ is a basis for the
$K$-module of all functions $f:\Z_+\to K$ that have 
period $c^r$.
\end{theorem}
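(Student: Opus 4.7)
The characteristic $c$ of an integral domain is necessarily prime. The strategy is to prove in turn that every $\rho_m$ with $0\le m<c^r$ has period $c^r$, and then that the $c^r\times c^r$ ``Pascal'' matrix $A$ whose columns record the first $c^r$ values of these $\rho_m$ has determinant $1_K$, from which linear independence and spanning fall out together.

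For the periodicity I would expand by Chu--Vandermonde:
$$\binom{m+k+c^r}{m}=\sum_{i=0}^{m}\binom{c^r}{i}\binom{m+k}{m-i}.$$
The Frobenius identity $(1+x)^c\equiv 1+x^c\pmod c$, iterated $r$ times, gives $c\mid\binom{c^r}{i}$ for $0<i<c^r$. Because $m<c^r$, every index $i$ in the sum satisfies $0\le i\le m<c^r$, so only the $i=0$ term survives modulo $c$; applying $\pi_K$ yields $\rho_m(k+c^r)=\rho_m(k)$ in $K$.

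For the basis claim, set $n:=c^r$. A function $\Z_+\to K$ of period $n$ is determined by its restriction to $\{0,\ldots,n-1\}$, so the $K$-module of such functions is free of rank $n$, and it suffices to show that the $n\times n$ matrix $A$ with $A_{km}:=\rho_m(k)=\pi_K(\binom{m+k}{m})$ is invertible over $K$. The key step is the factorisation $A=PP^{\top}$, where $P$ is the lower-triangular Pascal matrix with entries $P_{ij}:=\pi_K(\binom{i}{j})$; indeed, by another application of Chu--Vandermonde,
$$(PP^{\top})_{km}=\sum_{j}\binom{k}{j}\binom{m}{j}=\binom{k+m}{m},$$
which matches $A_{km}$ after applying $\pi_K$. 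Since $P$ is lower triangular with $1_K$ on the diagonal, $\det A=(\det P)^2=1_K\in K^{\times}$. Invertibility of $A$ delivers independence and spanning simultaneously: for any $f$ of period $n$, the linear system $A\vec\lambda=(f(0),\ldots,f(n-1))^{\top}$ has a unique solution $\vec\lambda\in K^n$, and the function $\sum_m\lambda_m\rho_m$ has period $n$ by the first step and agrees with $f$ on $\{0,\ldots,n-1\}$, hence on all of $\Z_+$.

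The one non-routine ingredient is spotting the Pascal factorisation $A=PP^{\top}$; once that is in hand, the whole argument is bookkeeping, with the prime-power divisibility $c\mid\binom{c^r}{i}$ ($0<i<c^r$) providing the only number-theoretic input.
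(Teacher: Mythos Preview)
Your argument is correct, and it takes a genuinely different route from the paper's. The paper proves periodicity and invertibility of the value-matrix $B_r=(\rho_m(k))_{0\le m,k<c^r}$ by first establishing a Lucas-type product formula $\rho_m(k)=\prod_s\rho_{m_s}(k_s)$ in the $c$-adic digits of $m$ and $k$; from this it reads off both the period $c^r$ (since the digits of $k$ beyond the $(r-1)$st contribute only factors $\rho_0(\cdot)=1$ when $m<c^r$) and the fact that $B_r$ is upper-left triangular with alternating $\pm1$ on the antidiagonal, hence invertible. Your proof replaces both steps with Vandermonde identities: periodicity via $\binom{m+k+c^r}{m}=\sum_i\binom{c^r}{i}\binom{m+k}{m-i}$ together with $c\mid\binom{c^r}{i}$ for $0<i<c^r$, and invertibility via the Pascal factorisation $A=PP^{\top}$, giving $\det A=1_K$ directly. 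Your approach is shorter and more self-contained for this particular theorem; the paper's product formula, on the other hand, yields finer structural information about $B$ (the self-similar block pattern and the explicit antidiagonal entries) that the author develops as corollaries in their own right, so the extra work there is not wasted.
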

To be clear, when we say that $f$ has period $n$
we mean that $f(k+n)=f(k)$ for all $k\in\Z_+$. We do
not mean that $n$ is the least positive period of $f$.

We will prove this theorem in Section \ref{S:T-proof}.

The theorem has several immediate corollaries:
\begin{corollary}
Suppose $K$ has characteristic $c>0$.
\\(1)
The sum-functions over $K$ are the same
as the functions whose least positive period is
some power of $c$.
\\(2) 
The basic sum-functions form a linearly-independent set
over $\hat K$.
\\(3) 
Each sum-function $h\in\Sigma_K$ 
has a unique expression
$$ h(k) = \sum_m^\infty h_m \rho_m(k), \ \forall k\in\Z_+,$$
in which each $h_m\in K$ and only a finite number of
$h_m$ are nonzero.
\qed
\end{corollary}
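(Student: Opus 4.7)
The plan is to obtain an explicit closed form for $\rho_m(k)$ in characteristic $c$ via a generating-function computation, from which both halves of the theorem fall out. First I would work in $K[[t]]$ with $G_m(t) := \sum_{k\ge 0} \rho_m(k)\, t^k$. Since $\rho_{m,K} = \pi_K\circ\rho_{m,\Z}$ and $\sum_k \binom{m+k}{m} t^k = (1-t)^{-(m+1)}$ in $\Z[[t]]$, applying $\pi_K$ coefficientwise gives $G_m(t) = (1-t)^{-(m+1)}$ in $K[[t]]$.

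The key observation is the Frobenius identity. Because the characteristic of an integral domain with unity is $0$ or prime, $c$ is prime, so $c\mid\binom{c}{i}$ for $0<i<c$; the binomial theorem then gives $(1-t)^c = 1-t^c$ in $K[[t]]$ (the sign check at $c=2$ uses $-1=1$), and iteration yields $(1-t)^{c^r} = 1-t^{c^r}$. Writing $s:=c^r-1-m$ for $0\le m<c^r$, I would then compute
$$ G_m(t) = \frac{(1-t)^s}{(1-t)^{c^r}} = (1-t)^s\sum_{j\ge 0} t^{jc^r}. $$
Since $(1-t)^s$ is a polynomial of degree $s<c^r$, extracting the coefficient of $t^k$ for $k=qc^r+b$ with $0\le b<c^r$ yields
$$ \rho_m(k) = (-1)^b\,\pi_K\!\left(\binom{s}{b}\right), $$
with the convention $\binom{s}{b}=0$ for $b>s$. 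This immediately shows every $\rho_m$ with $m<c^r$ has period $c^r$.

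Finally, for the basis property, I would note that the $K$-module of period-$c^r$ functions is free of rank $c^r$ (via evaluation on $\{0,1,\ldots,c^r-1\}$), so it suffices to check that the $c^r\times c^r$ matrix $M$ with entries $M_{k,m}:=\rho_m(k)$ for $0\le k,m<c^r$ is invertible over $K$. The formula above gives $M_{k,m}=0$ whenever $k+m\ge c^r$, so after reindexing columns by $a:=c^r-1-m$ the matrix is upper triangular with diagonal entries $(-1)^a\pi_K(\binom{a}{a})=(-1)^a$. These are units in $K$, so $\det M = \pm 1$, $M$ is invertible, and the theorem follows. The main technical point is verifying the Frobenius identity $(1-t)^{c^r}=1-t^{c^r}$ in $K[[t]]$; once that is in hand, both the periodicity and the upper-triangular structure fall out of a single explicit formula.
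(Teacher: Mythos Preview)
Your argument is correct. What you have written is in effect an alternative proof of the underlying Theorem~\ref{T:sum-fn}; the corollary itself carries a \qed\ in the paper because parts (1)--(3) are immediate once one knows that $\{\rho_m:0\le m<c^r\}$ is a $K$-basis for the period-$c^r$ functions (linear independence over $K$ upgrades to $\hat K$ by clearing denominators, and any function of period a power of $c$ lies in some such span).

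The route, however, is genuinely different from the paper's. The paper proves Theorem~\ref{T:sum-fn} by first establishing the $c$-adic product formula $\rho_m(k)=\prod_r \rho_{m_r}(k_r)$ (Proposition~\ref{P:product-formula}, essentially Lucas' theorem for $\binom{m+k}{m}$) via an induction using $(1+t)^c=1+t^c$, and then reads off both periodicity and the upper-left-triangular shape of $B_r$ from that self-similar block structure; the antidiagonal entries $\pm1$ are obtained by a separate Pascal-triangle argument. You instead work with the generating function $G_m(t)=(1-t)^{-(m+1)}$, apply Frobenius once at level $c^r$ to rewrite it as $(1-t)^{c^r-1-m}/(1-t^{c^r})$, and extract the closed form $\rho_m(k)=(-1)^b\,\pi_K\!\binom{c^r-1-m}{b}$ directly. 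Your approach is shorter and delivers the triangular shape and the unit antidiagonal in a single stroke; the paper's approach, while longer, yields the full fractal block pattern $B_{r+1}\sim B_1\otimes B_r$ of the basic sum-function matrix, which is of independent interest but not actually needed for the corollary at hand.
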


This allows us to define the
degree of a sum-function as follows:

\begin{definition}
The \emph{degree} of the sum-function
$ h = \sum_m^\infty h_m \rho_m$ is the largest
$m$ having $h_m\not=0$.
\end{definition}

This is consistent with the definitions previously given
in the case of rings $K$ of characteristic zero, although
it no longer relates to the degree of any associated polynomial.

From Corollary \ref{C:coefficient} we deduce:
\begin{proposition}
If $h$ and $h'$ are sum-functions, then
$$ \deg( hh' ) \le \deg(h) + \deg(h'). $$
\qed
\end{proposition}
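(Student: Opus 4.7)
The plan is to reduce everything to the bilinear structure of pointwise multiplication on the basis of basic sum-functions, and then quote Proposition~\ref{C:coefficient} on each basis product.

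First I would fix the notation. Set $M:=\deg(h)$ and $N:=\deg(h')$, and use the uniqueness of the expansion in basic sum-functions (which holds in characteristic zero by the linear-independence argument in that subsection, and in positive characteristic by Theorem~\ref{T:sum-fn}) to write
\begin{equation*}
h = \sum_{m=0}^{M} h_m\,\rho_m, \qquad
h' = \sum_{n=0}^{N} h'_n\,\rho_n,
\end{equation*}
with $h_m,h'_n\in K$. The degrees are honest: no $\rho_j$ with $j>M$ (respectively $j>N$) appears.

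Next I would expand the pointwise product by bilinearity, obtaining
\begin{equation*}
hh' \;=\; \sum_{m=0}^{M}\sum_{n=0}^{N} h_m h'_n \,(\rho_m\rho_n).
\end{equation*}
By Proposition~\ref{C:coefficient}, each pointwise product $\rho_m\rho_n$ is a $\Z_K$-linear combination of basic sum-functions $\rho_j$ with $0\le j\le m+n$. Substituting these expansions into the double sum and regrouping, every basic sum-function appearing in $hh'$ has index at most $M+N$. By uniqueness of the expansion, this means $(hh')_j=0$ for all $j>M+N$, and hence $\deg(hh')\le M+N=\deg(h)+\deg(h')$.

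The only subtle point worth flagging is that the argument relies on uniqueness of the basis expansion, which is guaranteed in both characteristic regimes by the results already established; no separate proof is required in the two cases. No inequality is wasted either: equality fails exactly when the leading coefficient $\pi_K\!\binom{M+N}{N}\,h_M h'_N$ of Proposition~\ref{C:coefficient} vanishes, which, as the author already remarks, can indeed happen in finite characteristic. I do not expect any obstacle; the entire proof is essentially a one-line consequence of Proposition~\ref{C:coefficient} combined with unique expansion.
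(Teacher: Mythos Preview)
Your proposal is correct and is precisely the argument the paper has in mind: the paper's entire proof is the phrase ``From Corollary~\ref{C:coefficient} we deduce'' followed by a \qed, and your write-up simply makes explicit the bilinear expansion and appeal to uniqueness that this phrase encodes. There is no difference in approach, only in level of detail.
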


\section{Block-patterns}\label{S:T-proof}
This section may be skipped by
readers who are only interested in
domains $K$ of characteristic zero.
Suppose the integral domain $K$ has characteristic $c>0$.
We may regard the values $\rho_m(k)\in\Z_K$
as forming an infinite matrix, with entries
drawn from $\{0,1,\ldots,c-1\}$, with rows
indexed by $m=0,1,2,\ldots$ and
columns indexed by $k=0,1,2,\ldots$.
We call this \emph{the basic sum-function matrix},
and denote it by $B$. It is an element of the
matrix ring $\gl(\Z_+,\Z_K)$, and $B_{mk}:= \rho_m(k)$.

It is a symmetric matrix. The top-left
$c\times c$ block provides a pattern
that we call \emph{the template}, and
denote by $B_1$. 
It has the
$c^2$ entries given by
$$ \rho_m(k) = \binom{m+k}{k} = 
\frac{(m+k)\cdots(m+1)m}{k!}, $$
where we note that $k!$ is invertible in
$\Z_K$ when $0\le k<c$. For instance, the templates
in characteristics $3$ and $5$ are
$$
\left(
\begin{matrix}
1&1&1\\1&2&0\\1&0&0
\end{matrix}
\right)
,\textup{ and }
\left(
\begin{matrix}
1&1&1&1&1\\1&2&3&4&0\\1&3&1&0&0\\
1&4&0&0&0\\1&0&0&0&0
\end{matrix}
\right).
$$
For general $r\in\N$, we denote the top left
$c^r\times c^r$ block of $B$ by $B_r$.

The top left $c^2\times c^2$ block $B_2$
falls into $c^2$
blocks, each one $c\times c$, and (as we shall see)
the pattern is that the $(i,j)$-th
$c\times c$ block is obtained from the template
by mutiplying each term (modulo $c$) by
the $(i,j)$-th entry in the template! Thus, denoting
the template by $T$, the patterns in characteristics
$3$ and $5$ are:
$$
\left(
\begin{matrix}
T&T&T\\T&2T&0\\T&0&0
\end{matrix}
\right)
,\textup{ and }
\left(
\begin{matrix}
T&T&T&T&T\\T&2T&3T&4T&0\\T&3T&T&0&0\\
T&4T&0&0&0\\T&0&0&0&0
\end{matrix}
\right).
$$
The block $B_3$, 
is obtained
by iterating this procedure: It has $c^2$
blocks, each $c^2\times c^2$, and they are
obtained from the top left $c^2\times c^2$
block by multiplying it by the appropriate
entry in the template. For instance, you can
discern this structure in the following top left section
of the characteristic $3$ matrix $B$:
$$
\left(
\begin{matrix}
\begin{matrix}
1&1&1\\1&2&0\\1&0&0
\end{matrix}
&
\begin{matrix}
1&1&1\\1&2&0\\1&0&0
\end{matrix}
&
\begin{matrix}
1&1&1\\1&2&0\\1&0&0
\end{matrix}
&
\begin{matrix}
1&1&1\\1&2&0\\1&0&0
\end{matrix}
&
\begin{matrix}
1&1&1\\1&2&0\\1&0&0
\end{matrix}
&
\begin{matrix}
1&1&1\\1&2&0\\1&0&0
\end{matrix}
&
\begin{matrix}
1&1&1\\1&2&0\\1&0&0
\end{matrix}
\\
\begin{matrix}
1&1&1\\1&2&0\\1&0&0
\end{matrix}
&
\begin{matrix}
2&2&2\\2&1&0\\2&0&0
\end{matrix}
&
\begin{matrix}
0&0&0\\0&0&0\\0&0&0
\end{matrix}
&
\begin{matrix}
1&1&1\\1&2&0\\1&0&0
\end{matrix}
&
\begin{matrix}
2&2&2\\2&1&0\\2&0&0
\end{matrix}
&
\begin{matrix}
0&0&0\\0&0&0\\0&0&0
\end{matrix}
&
\begin{matrix}
1&1&1\\1&2&0\\1&0&0
\end{matrix}
\\
\begin{matrix}
1&1&1\\1&2&0\\1&0&0
\end{matrix}
&
\begin{matrix}
0&0&0\\0&0&0\\0&0&0
\end{matrix}
&
\begin{matrix}
0&0&0\\0&0&0\\0&0&0
\end{matrix}
&
\begin{matrix}
1&1&1\\1&2&0\\1&0&0
\end{matrix}
&
\begin{matrix}
0&0&0\\0&0&0\\0&0&0
\end{matrix}
&
\begin{matrix}
0&0&0\\0&0&0\\0&0&0
\end{matrix}
&
\begin{matrix}
1&1&1\\1&2&0\\1&0&0
\end{matrix}
\\
\begin{matrix}
1&1&1\\1&2&0\\1&0&0
\end{matrix}
&
\begin{matrix}
1&1&1\\1&2&0\\1&0&0
\end{matrix}
&
\begin{matrix}
1&1&1\\1&2&0\\1&0&0
\end{matrix}
&
\begin{matrix}
2&2&2\\2&1&0\\2&0&0
\end{matrix}
&
\begin{matrix}
2&2&2\\2&1&0\\2&0&0
\end{matrix}
&
\begin{matrix}
2&2&2\\2&1&0\\2&0&0
\end{matrix}
&
\begin{matrix}
0&0&0\\0&0&0\\0&0&0
\end{matrix}
\\
\begin{matrix}
1&1&1\\1&2&0\\1&0&0
\end{matrix}
&
\begin{matrix}
2&2&2\\2&1&0\\2&0&0
\end{matrix}
&
\begin{matrix}
0&0&0\\0&0&0\\0&0&0
\end{matrix}
&
\begin{matrix}
2&2&2\\2&1&0\\2&0&0
\end{matrix}
&
\begin{matrix}
1&1&1\\1&2&0\\1&0&0
\end{matrix}
&
\begin{matrix}
0&0&0\\0&0&0\\0&0&0
\end{matrix}
&
\begin{matrix}
0&0&0\\0&0&0\\0&0&0
\end{matrix}
\\
\begin{matrix}
1&1&1\\1&2&0\\1&0&0
\end{matrix}
&
\begin{matrix}
0&0&0\\0&0&0\\0&0&0
\end{matrix}
&
\begin{matrix}
0&0&0\\0&0&0\\0&0&0
\end{matrix}
&
\begin{matrix}
2&2&2\\2&1&0\\2&0&0
\end{matrix}
&
\begin{matrix}
0&0&0\\0&0&0\\0&0&0
\end{matrix}
&
\begin{matrix}
0&0&0\\0&0&0\\0&0&0
\end{matrix}
&
\begin{matrix}
0&0&0\\0&0&0\\0&0&0
\end{matrix}
\\
\begin{matrix}
1&1&1\\1&2&0\\1&0&0
\end{matrix}
&
\begin{matrix}
1&1&1\\1&2&0\\1&0&0
\end{matrix}
&
\begin{matrix}
1&1&1\\1&2&0\\1&0&0
\end{matrix}
&
\begin{matrix}
0&0&0\\0&0&0\\0&0&0
\end{matrix}
&
\begin{matrix}
0&0&0\\0&0&0\\0&0&0
\end{matrix}
&
\begin{matrix}
0&0&0\\0&0&0\\0&0&0
\end{matrix}
&
\begin{matrix}
0&0&0\\0&0&0\\0&0&0
\end{matrix}

\end{matrix}
\right)
$$

Each number $m\in\Z_+$ has a unique 
\emph{canonical $c$-adic expansion}
$$ m = \sum_{r=0}^\infty m_r c^r $$
in powers of (the prime) $c$, where $0\le m_r<c$
and only a finite number of $m_r$ are nonzero.
Using the $c$-adic expansions of $m$ and $k$, we
have a formula for $\rho_m(k)$:

\begin{proposition}\label{P:product-formula}
If $m,k\in\Z_+$ have the canonical $c$-adic expansions
$ m = \sum_{r=0}^\infty m_r c^r $ and
$ k = \sum_{r=0}^\infty k_r c^r $, then
\\(1)
\begin{equation}\label{E:formula}
 \rho_m(k) = \prod_{r=0}^\infty \rho_{m_r}(k_r), 
\end{equation}
\\(2) In particular, for each $r\ge1$,
$\rho_{m}(k) =0$ whenever $m<c^r$, $k<c^r$, and
$m+k\ge c^r$.
\end{proposition}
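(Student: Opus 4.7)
\emph{Proof plan.} The characteristic of any integral domain is zero or prime, so $c$ is prime, and the Frobenius identity $(1+x)^c\equiv 1+x^c\pmod c$ holds in $\Z[x]$. Iterating gives $(1+x)^{c^n}\equiv 1+x^{c^n}\pmod c$ for every $n\in\Z_+$. This is the only nontrivial ingredient beyond the identity $\rho_m(k)=\pi_K(\binom{m+k}{k})$ from Proposition~\ref{P:Sigma_K-description}(1).

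For part (1), I will induct on the least $n$ such that $m,k<c^{n+1}$; the base case $n=0$ is immediate. For the step, write $m=ic^n+m'$ and $k=jc^n+k'$ with $0\le i,j<c$ and $0\le m',k'<c^n$, so that $m_n=i$, $k_n=j$, and the digits of $m,k$ at positions below $n$ coincide with those of $m',k'$. Working modulo $c$,
$$(1+x)^{m+k}=(1+x)^{(i+j)c^n+m'+k'}\equiv(1+x^{c^n})^{i+j}\,(1+x)^{m'+k'}\pmod c.$$
Extracting the coefficient of $x^{jc^n+k'}$ by convolution, the only contribution comes from pairing $x^{jc^n}$ from the first factor with $x^{k'}$ from the second: any strictly larger power of $x^{c^n}$ would demand a negative exponent in the second factor (as $k'<c^n$), and any strictly smaller would demand an exponent exceeding $m'+k'$ in the second factor (as $m'<c^n$). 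Hence
$$\rho_m(k)=\binom{i+j}{j}\binom{m'+k'}{k'}=\rho_i(j)\cdot\rho_{m'}(k')$$
in $\Z_K$. The inductive hypothesis applied to $\rho_{m'}(k')$ then yields the full product formula.

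For part (2), suppose $m,k<c^r$ and $m+k\ge c^r$. If every digit-sum $m_s+k_s$ for $s<r$ were strictly less than $c$, then no carries would occur in the base-$c$ addition and one would have $m+k=\sum_{s<r}(m_s+k_s)c^s\le c^r-1$, a contradiction. So some $s<r$ has $m_s+k_s\ge c$. For that $s$, the binomial $\binom{m_s+k_s}{k_s}$ has a numerator with exactly one factor of $c$ (namely $c$ itself, since $m_s+k_s<2c$), while its denominator factors $k_s!$ and $m_s!$ are products of positive integers less than $c$; hence $c\mid\binom{m_s+k_s}{k_s}$, i.e.\ $\rho_{m_s}(k_s)=0$ in $\Z_K$, and part (1) forces $\rho_m(k)=0$.

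The main obstacle will be the coefficient-extraction bookkeeping in the inductive step of part (1); once the ``collapse to a single term'' in the convolution is verified, everything else is elementary arithmetic in base $c$.
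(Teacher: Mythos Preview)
Your proof is correct and follows essentially the same route as the paper: both arguments use the Frobenius identity $(1+x)^{c^n}\equiv 1+x^{c^n}\pmod c$, factor $(1+x)^{m+k}$ accordingly, and extract a single surviving term in the resulting convolution by the same degree bounds. The only notable difference is organisational: the paper interleaves parts (1) and (2) in a joint induction on $r$, whereas you establish the product formula (1) in full first and then deduce (2) by locating a digit with $m_s+k_s\ge c$; your separation is arguably cleaner, and your carry-counting argument for (2) sidesteps a small imprecision in the paper's claim that $m+k\ge c^{r+1}$ forces $m_r+k_r\ge c$.
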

Note that all but a finite number of terms
in the product are equal to $1$.  In fact, since
$m_r=0$ when $r>\log_c m$, we may write
$$ \rho_m(k) = \prod_{r=0}^{\floor{\log_cm}}\prod_{s=0}^{\floor{\log_ck}}
\rho_{m_r}(k_s), $$
where $\floor{\cdot}$ denotes the floor function.
This proposition expresses all entries in the $(\rho_m(k)$
matrix in terms of the upper left $c\times c$ template $B_1$, 
and justifies
the pattern described at the end of the last subsection.

Part (2) may be summarized by saying that the
submatrices $B_r$ are `upper-left triangular',
i.e. have zero entries below the antidiagonal.

\begin{proof} 
It is obvious that formula (\ref{E:formula}) holds
when $m<c$ and $k<c$, i.e. for the entries in the template. 
Both sides of the formula are symmetric in $m$ and $k$,
so it suffices to prove the case $k\le m\ge c$.

We proceed by induction.
Suppose $r\ge1$ and (1) and (2) hold whenever
$m<c^r$ and $k<c^r$. Fix $m$ and $k$
with $c^r\le m<c^{r+1}$,
and $0\le k\le m$. Then $m_r\not=0$,
and $m=m_rc^r+m'$ with $0\le m'<c^r$.
Also $k= k_rc^r+k'$ with $0\le k'<c^r$.

$\rho_m(k)$ is the coefficient of $t^m$ in $(1+t)^{m+k}$
(regarded as a polynomial over $\Z_K$). In standard 
notation, we denote this coefficient by\\ $[t^m](1+t)^{m+k}$.
Thus
$$ \rho_m(k) = [t^m]\left(  (1+t)^{(m_r+k_r)c^r}(1+t)^{m'+k'}
\right).$$
Now $(1+t)^c= 1+t^c$ (since $c=0$ in $K$), 
and repeating this we get
$$ (1+t)^{(m_r+k_r)c^r} = (1+t^{c^r})^{m_r+k_r}. $$
From this factor, only the term in 
$t^{m_rc^r}$ can contribute 
to $\rho_m(k)$ (since $k'<c^r$), so
$$ \rho_m(k) = 
\left(
[t^{m_rc^r}](1+t^{c^r})^{m_r+k_r} 
\right)
\cdot
\left(
[t^b](1+t)^{m'+k'}
\right).
$$
So
$$ \rho_m(k) = \rho_{m_r}(k_r) \cdot \rho_{m'}(k') 
.$$
Applying the assumption (1) to
$m' = \sum_{s<r} m_sc^s$ and $k'=\sum_{s<r} k_sc^s$,
we have 
$$ \rho_{m'}(k') = 
  \prod_{s<r} \rho_{m_s}(k_s), 
$$
so we we get formula (\ref{E:formula}) for the pair
$(m,k)$. 

Turning to (2), we want to show that 
$\rho_m(k)=0$ if $m+k\ge c^{r+1}$.
But if $m+k\ge c^{r+1}$, then $m_r+c_r\ge c$,
and hence $\rho_{m_r}(c_r)=0$
(i.e. the template is upper-left triangular),
and hence the formula gives $\rho_m(k)=0$.

This completes the induction step,
and the proof.
\end{proof}

We now draw some corollaries. First,
the $c^r$-th rows  of the matrix 
take a special form:
\begin{corollary}
For $r\in\Z^+$, we have $\rho_{c^r}(k) = k_r+1$.
\end{corollary}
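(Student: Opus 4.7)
The plan is to derive this as an immediate specialisation of the product formula in Proposition \ref{P:product-formula}. Fix $r\in\Z_+$ and observe that $m=c^r$ has the trivial canonical $c$-adic expansion with $m_r=1$ and $m_s=0$ for all $s\ne r$. Applying formula \eqref{E:formula} to the pair $(c^r,k)$ collapses the infinite product to a single nontrivial factor.

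More precisely, I would write
$$ \rho_{c^r}(k) = \prod_{s=0}^\infty \rho_{m_s}(k_s) = \rho_1(k_r)\cdot\prod_{s\ne r}\rho_0(k_s). $$
Since $\rho_0\equiv 1_K$ by definition, every factor in the product over $s\ne r$ equals $1$, so it remains to evaluate $\rho_1(k_r)$. By the inductive definition of the basic sum-functions, $\rho_1(j)=\sum_{i=0}^{j}\rho_0(i)=j+1$, so $\rho_1(k_r)=k_r+1$ (the right-hand side being interpreted via $\pi_K$, which is harmless since $0\le k_r<c$).

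There is no genuine obstacle here; the result is simply what the product formula delivers once one notes the sparseness of the $c$-adic expansion of a power of $c$. The only small thing to be careful about is that $k_r$ ranges over $\{0,1,\ldots,c-1\}$, so that $\rho_1(k_r)=k_r+1$ is an honest computation in $\Z_K$ and does not require any reduction modulo $c$.
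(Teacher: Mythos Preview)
Your proof is correct and follows essentially the same route as the paper: apply the product formula \eqref{E:formula} with $m=c^r$, use $\rho_0\equiv 1$ to collapse all factors except the $s=r$ one, and finish with $\rho_1(k_r)=k_r+1$. The paper's version is terser but the argument is identical.
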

\begin{proof}
Formula (\ref{E:formula}) gives
$$ \rho_{c^r}(k) = \rho_1(k_r),$$
since $\rho_0(j)=1$ for all $j$.
But $\rho_1(k) = k+1$ (modulo $c$).
\end{proof}

In particular, the first $c^r$ entries in the
$c^r$-th row are all $1$. The $c^r+1$-st entry is
$2$ (equal to zero, in case $c=2$).

By symmetry, we have:
\begin{corollary}
For $r\in\Z^+$, we have $\rho_m(c^r) = m_r+1$.
\qed
\end{corollary}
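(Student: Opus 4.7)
The plan is to derive this from the previous corollary by invoking the symmetry of the basic sum-function matrix $B$, which was noted explicitly in the introduction to Section \ref{S:T-proof} (``It is a symmetric matrix''). Since $B_{mk}=\rho_m(k)$ and $B_{mk}=B_{km}$, we have $\rho_m(c^r)=\rho_{c^r}(m)$, and then the previous corollary applied with $k=m$ gives $\rho_{c^r}(m)=m_r+1$.

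If one wishes to make the argument self-contained rather than citing symmetry as a known fact, the cleanest route is to apply Proposition \ref{P:product-formula}(1) directly. For $k=c^r$ the canonical $c$-adic expansion has $k_r=1$ and $k_s=0$ for $s\neq r$, so formula \eqref{E:formula} reduces to
$$ \rho_m(c^r) = \prod_{s=0}^\infty \rho_{m_s}(k_s) = \rho_{m_r}(1)\cdot\prod_{s\neq r} \rho_{m_s}(0). $$
Each factor with $s\neq r$ equals $\rho_{m_s}(0)=\binom{m_s}{m_s}=1$, and the remaining factor is $\rho_{m_r}(1)=\rho_1(m_r)=m_r+1$ (by a second application of formula \eqref{E:formula}, or just from the definition). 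This gives $\rho_m(c^r)=m_r+1$, as required.

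There is no real obstacle here; the work was all done in Proposition \ref{P:product-formula}, and this corollary is just its dual under the $(m,k)$ symmetry of $\rho_m(k)=\binom{m+k}{m}$. A one-line proof invoking that symmetry and the previous corollary is probably what the paper wants.
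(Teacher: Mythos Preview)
Your proposal is correct and matches the paper's approach exactly: the paper's entire proof is the phrase ``By symmetry, we have'' preceding the statement, invoking the $(m,k)$ symmetry of $B_{mk}=\rho_m(k)$ together with the previous corollary. Your alternative direct computation from Proposition~\ref{P:product-formula}(1) is also fine, though not needed.
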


Combining these, we see that the
upper-left triangular
square submatrix $(B_{mk})$ indexed by
$1\le m,k< c^r$ is `framed' in $B$
by a square of $1$'s, except that that
lower right corner of the frame is a $2$.

By repeatedly applying the Law of Pascal's triangle,
working up from the $c^r$-th row,
we deduce inductively:
\begin{corollary}
For each $r\in\N$
all elements on the antidiagonal of $B_r$
are of the form $\pm1$, and alternate
between $+1$ and $-1$.
\end{corollary}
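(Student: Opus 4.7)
The plan is to apply Proposition \ref{P:product-formula} directly. An entry on the antidiagonal of $B_r$ has the form $\rho_m(k)$ with $0 \le m, k < c^r$ and $m + k = c^r - 1$. The first step is to verify that this forces the digitwise identity $m_s + k_s = c - 1$ for every $s = 0, \ldots, r-1$, where $m = \sum m_s c^s$ and $k = \sum k_s c^s$ are the base-$c$ expansions. This is a short no-carry argument: since $m_s, k_s \le c - 1$ we have $m_s + k_s \le 2c - 2$, so a carry out of position $s$ would leave the digit $m_s + k_s - c \le c - 2$ in the sum, contradicting the target digit $c - 1$ of $c^r - 1$. Inductively propagating this observation from $s = 0$ upward rules out carries entirely.

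Once this is in hand, formula (\ref{E:formula}) of Proposition \ref{P:product-formula} gives
$$\rho_m(k) = \prod_{s=0}^{r-1}\rho_{m_s}(c - 1 - m_s) = \prod_{s=0}^{r-1}\binom{c-1}{m_s}.$$
Because $K$ is an integral domain with finite characteristic $c$, the number $c$ is prime, and in $\Z_K \subseteq K$ the classical identity
$$\binom{c-1}{i} = \frac{(c-1)(c-2)\cdots(c-i)}{i!} \equiv \frac{(-1)(-2)\cdots(-i)}{i!} = (-1)^i \pmod c$$
holds. Hence $\rho_m(k) = (-1)^{\sigma_c(m)}$, where $\sigma_c(m) := m_0 + m_1 + \cdots + m_{r-1}$ is the base-$c$ digit sum of $m$. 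This already establishes that every antidiagonal entry of $B_r$ is $\pm 1$.

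It remains to verify the alternation as $m$ runs from $0$ to $c^r - 1$ along the antidiagonal. In characteristic $2$ we have $+1 = -1$ in $K$, so the statement is vacuous. For odd $c$, a short computation shows that $\sigma_c(m+1) - \sigma_c(m) = 1 - j(c-1)$, where $j$ is the length of the trailing block of $(c-1)$'s in the base-$c$ expansion of $m$; since $c - 1$ is even, this difference is always odd, so the parity of $\sigma_c(m)$ flips at each step, producing the required alternation $+1, -1, +1, -1, \ldots$.

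The main obstacle is just the no-carry digitwise lemma in the first paragraph; both the evaluation of the template antidiagonal via $\binom{c-1}{i} \equiv (-1)^i$ and the parity computation are standard, and the result then follows immediately from the already-established product formula. This gives a self-contained argument that does not require the inductive Pascal-triangle descent from the $c^r$-th row, though that route (peeling off one row at a time using $\rho_{m-1}(k) = \rho_m(k) - \rho_m(k-1)$, starting from the known row of $1$'s) would yield the same conclusion.
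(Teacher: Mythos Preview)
Your proof is correct and takes a genuinely different route from the paper's. The paper argues by an inductive Pascal descent: starting from the $c^r$-th row of $B$ (already shown to be all $1$'s in its first $c^r$ positions) and using the recursion $\rho_{m-1}(k)=\rho_m(k)-\rho_m(k-1)$ together with the vanishing below the antidiagonal, one sees that each antidiagonal entry is the negative of the one just below it, hence the alternation. You instead apply the product formula of Proposition~\ref{P:product-formula} directly, reducing everything to the template antidiagonal entries $\binom{c-1}{i}\equiv(-1)^i$ and obtaining the explicit closed form $\rho_m(c^r-1-m)=(-1)^{\sigma_c(m)}$.

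Your approach buys an exact formula (not just the sign pattern) and makes the dependence on $m$ completely transparent, at the cost of the small digitwise no-carry lemma and the parity-of-digit-sum computation. The paper's route is more elementary in that it needs only the recursion and the boundary data already assembled, with no appeal to the full product formula or to $\binom{c-1}{i}\equiv(-1)^i$; it is also the approach you yourself sketch in your final paragraph. Either argument is entirely adequate here.
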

In other words, for $0\le m,k<c^r$, we have
\\$\rho_m(c^r-m-1)=\pm1$. Note that except
in characteristic $2$, this antidiagonal
has an odd number of entries, and in
characteristic $2$, all the entries are
the same.
\\Thus if we reverse the order of the rows in $B_r$,
we get a lower-triangular matrix having
invertible elements on the diagonal.

\begin{corollary}
$B_r\in\GL(c^r,K)$, so the $K$-linear span
of the rows is $K^{c^r}$, and the
rows are K-linearly-independent. 
\end{corollary}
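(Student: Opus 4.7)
My plan is to combine the two preceding corollaries. Proposition~\ref{P:product-formula}(2) tells us that $B_r$ is upper-left triangular (zero strictly below the antidiagonal), and the previous corollary asserts that the antidiagonal entries are all $\pm 1$. Together these essentially force invertibility over $K$.

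Concretely, I would let $P$ denote the $c^r\times c^r$ permutation matrix that reverses the order of the rows, so that $PB_r$ is lower-triangular with diagonal entries equal to the antidiagonal entries of $B_r$, each of which lies in $\{+1,-1\}\subset K^\times$. Hence $\det(PB_r)$, being the product of these diagonal entries, equals $\pm 1$, and since $\det P=\pm 1$ as well, $\det B_r=\pm 1\in K^\times$. The remark made earlier in the paper concerning the adjugate-transpose construction then supplies an inverse of $B_r$ in $\gl(c^r,K)$, so $B_r\in\GL(c^r,K)$.

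The remaining assertions are immediate consequences of invertibility over $K$. From $B_r^{-1}B_r=I$, each standard basis vector of $K^{c^r}$ appears as a $K$-linear combination of the rows of $B_r$ with coefficients drawn from a row of $B_r^{-1}$, so the rows span $K^{c^r}$; and any $K$-linear dependence $\lambda B_r=0$ among the rows forces $\lambda=\lambda B_rB_r^{-1}=0$, giving linear independence.

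I do not anticipate any genuine obstacle; the hard structural work has already been done in the build-up to this corollary. The one subtlety worth flagging is that the units on the antidiagonal must lie in $K^\times$, not merely in $\hat K^\times$, and this holds automatically because $\pm 1\in K^\times$ in any integral domain with identity.
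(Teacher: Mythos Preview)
Your proposal is correct and matches the paper's intended argument essentially line for line: the paper records just before the corollary that reversing the rows of $B_r$ yields a lower-triangular matrix with invertible diagonal entries, and leaves the corollary unproved as an immediate consequence. Your write-up simply makes the determinant step explicit and spells out why invertibility over $K$ gives the spanning and independence claims.
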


Next, since $\rho_0(k)=1$ for all $k$, the
formula gives:
\begin{corollary}
$\rho_m$ has period $c^r$ whenever $0\le m<c^r $.
\end{corollary}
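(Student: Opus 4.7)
The plan is to read off the conclusion directly from the product formula (\ref{E:formula}) in Proposition \ref{P:product-formula}. Given $m$ with $0\le m<c^r$, the canonical $c$-adic expansion $m=\sum_s m_s c^s$ satisfies $m_s=0$ for every $s\ge r$. Since $\rho_0(j)=1$ for all $j$, the factors corresponding to $s\ge r$ in the product all collapse to $1$, leaving
$$ \rho_m(k) \;=\; \prod_{s=0}^{r-1}\rho_{m_s}(k_s).$$
Thus $\rho_m(k)$ depends on $k$ only through the low-order $c$-adic digits $k_0,\ldots,k_{r-1}$.

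Next, I would compare $\rho_m(k)$ with $\rho_m(k+c^r)$ by analysing what happens to the $c$-adic expansion of $k$ when we add $c^r$. Because $c^r$ has a single nonzero $c$-adic digit, located in position $r$, the addition $k\mapsto k+c^r$ can only alter the digits of $k$ in positions $\ge r$ (any carries propagate upward, never downward). In particular the low-order digits $k_0,\ldots,k_{r-1}$ are unchanged.

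Combining the two observations, the finite product $\prod_{s=0}^{r-1}\rho_{m_s}(k_s)$ is the same for $k$ and for $k+c^r$, so $\rho_m(k+c^r)=\rho_m(k)$ for every $k\in\Z_+$, which is exactly the statement that $\rho_m$ has period $c^r$. There is no real obstacle here; the only point requiring a line of justification is the unidirectional nature of carry in base-$c$ addition of $c^r$, which is elementary.
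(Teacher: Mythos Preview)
Your argument is correct and is exactly the approach the paper takes: the paper's proof is the single line ``since $\rho_0(k)=1$ for all $k$, the formula gives'' the result, and your write-up is simply the natural unpacking of that hint --- use the product formula, observe that $m_s=0$ for $s\ge r$ forces those factors to be $\rho_0(k_s)=1$, and note that adding $c^r$ leaves the digits $k_0,\ldots,k_{r-1}$ untouched. There is no substantive difference between the two.
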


\begin{proof}[Proof of Theorem \ref{T:sum-fn}]
The last two corollaries combine to complete the
proof of Theorem \ref{T:sum-fn}. The $\rho_m$
are periodic, with period $c^r$; 
they are already $K$-linearly-independent
as functions on the first $c^r$ nonnegative integers,
and hence \textit{a fortiori} as functions on
$\Z_+$; and their $K$-linear combinations 
give every $K$-valued function on $\Z_+$
of period $c^r$,
because any such function is determined by its values
on the subset $\{0,1,\ldots,c^r-1\}$.  
\end{proof}

\section{Iteration of Formal Maps}
\subsection{Coefficients of iterates}
We now consider the iteration of a map $g\in\G$.  We define
$g^{\circ k}$ for $k\in\Z_+$ by setting $g^{\circ 0}:= \ONE$
and inductively defining $g^{\circ(k+1)} := 
g\circ g^{\circ k}$.  

We also define the backward iterates by
$$g^{\circ{-k}}:= (g^{\circ(-1)})^{\circ k}.$$

For each $g\in\G$, 
the map $k\mapsto g^{\circ k}$ is a homomorphism
from the additive group $(\Z,+)$ into $\G$:
$$  g^{\circ(k+l)} = g^{\circ k}\circ g^{\circ l}, $$
whenever $k,l\in\Z$.

In the same way, we can define the $k$-th iterate $g^{\circ k}$
of an element $g$ belonging to the semigroup $\M^d$,
provided $k\in\Z_+$, but we cannot in general define
it for negative integers $k$.  For each such $g$,
the map $k\mapsto g^{\circ k}$ is a homomorphism
from the additive semigroup $(\Z_+,+)$ into 
the compositional semigroup $(\M^d, \circ)$.

\medskip
An element $g\in\M^d$ has a series expansion
$$ g = \sum_{m\in S} g_m\cdot m, $$
where $g_m = ((x_1\circ g)_m,\ldots,(x_d\circ g)_m)$
belongs to $K^d$. We refer to $g_m$ as \emph{the
$m$-th coefficient of $g$}.  Note that $g_m$ is a
$d$-dimensional vector over $K$.
If we group terms of the same degree,
by defining
$$ L_k(g):=  
\sum_{m\in S, \deg m=k} g_m\cdot m, \ \forall k\in\Z_+,$$
then we get the expansion
$$ g = \sum_{k=1}^\infty L_k(g). $$
Here, $L_1(g)=L(g)$ is the linear part of $g$,
and in general we refer to
to $L_k(g)$ as \emph{the $k$-th homogeneous 
term of $g$}.  This term is a $K^d$-valued 
homogeneous polynomial of degree $k$ with coefficients in $K$,
or, equivalently, it is a $d$-tuple of
homogeneous polynomials of degree $k$ over $K$.

\begin{theorem}\label{T:2} 
Suppose $g\in\G$ is tangent to the identity, 
and let $m\in S$.
Then there is a $d$-tuple of sum-functions 
$P(t)\in \Sigma_K^d$
(depending on $g$ and $m$)
such that $(g\pw k)_m = P(k)$ for each $k\in\N$.
	\end{theorem}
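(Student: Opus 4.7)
The plan is to induct on the degree $|m|$ of the monomial $m\in S$. For the base cases $|m|\le 1$: if $m=1$, then $(g\pw k)_1=0$ for every $k$ since $g\in\M^d$; if $m=x_j$, then $(g\pw k)_{x_j}$ is the $j$-th column of $L(g\pw k)=L(g)^k=\ONE$, constant in $k$. Both are $d$-tuples of sum-functions of degree zero.

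For the induction step, fix $m$ with $|m|\ge 2$, assume the result for every monomial of strictly smaller degree, and write $g=\ONE+h$ with $h\in\M^d$ and $L(h)=0$, so that $h=\sum_{|n|\ge 2}h_n\cdot n$ with $h_n\in K^d$. Since $g\pw{k+1}=g\circ g\pw k=g\pw k+h\circ g\pw k$, the forward difference in $k$ satisfies
$$(g\pw{k+1})_m-(g\pw k)_m=(h\circ g\pw k)_m=\sum_{2\le|n|\le|m|}h_n\cdot (n\circ g\pw k)_m.$$
Only finitely many $n$ contribute, because $g\pw k$ is tangent to the identity, whence $n\circ g\pw k$ starts in degree $|n|$, and its $m$-coefficient vanishes when $|n|>|m|$. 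For each admissible $n=x_1^{n_1}\cdots x_d^{n_d}$, expanding $n\circ g\pw k=\prod_j\bigl((g\pw k)_j\bigr)^{n_j}$ and reading off the coefficient of $m$ produces a finite sum, over tuples $(p_{j,l})$ of monomials with $\prod_{j,l}p_{j,l}=m$, of products $\prod_{j,l}\bigl((g\pw k)_{p_{j,l}}\bigr)_j$. Because each component $(g\pw k)_j$ starts with $x_j$, every $p_{j,l}$ has $|p_{j,l}|\ge 1$; combined with $|n|\ge 2$ this forces $|p_{j,l}|\le|m|-1<|m|$ for every factor. By the induction hypothesis each such factor is a sum-function of $k$, and by Corollary \ref{C:Sigma_K-is-algebra} pointwise products and finite $K$-linear combinations of sum-functions are again sum-functions, so $(h\circ g\pw k)_m\in\Sigma_K^d$.

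To conclude, telescope: $(g\pw k)_m=(\ONE)_m+\sum_{r=0}^{k-1}(h\circ g\pw r)_m$. Expanding $(h\circ g\pw r)_m=\sum_j c_j\rho_j(r)$ with $c_j\in K^d$ (only finitely many nonzero) and using the identity $\sum_{r=0}^{k-1}\rho_j(r)=\rho_{j+1}(k)-\rho_j(k)$, each partial sum remains a $K$-linear combination of basic sum-functions. Hence $(g\pw k)_m\in\Sigma_K^d$, completing the induction. The main obstacle is the degree bookkeeping in the middle paragraph: the tangent-to-the-identity hypothesis is used twice, once to force $|n|\ge 2$ in the expansion of $h$ and once to force $|p_{j,l}|\ge 1$ in every decomposition factor, and it is only the combination of these that delivers the strict inequality $|p_{j,l}|<|m|$ on which the induction rests.
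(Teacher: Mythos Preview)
Your argument is correct and follows essentially the same route as the paper's proof: induction on $\deg m$, the recursion $g\pw{k+1}=g\circ g\pw{k}$, a multilinear expansion of $(n\circ g\pw k)_m$ into products of coefficients of strictly lower degree, and a telescoping sum to recover $(g\pw k)_m$ as a sum-function. The one substantive difference is that the paper carries a sharper induction hypothesis---that $(n\circ g\pw k)_m$ is a sum-function of degree \emph{less than} $\deg m$---which yields the bound recorded immediately after the theorem as Corollary~\ref{C:to-T3}; your proof establishes the theorem as stated but not this refinement, since you do not track sum-function degrees through the product step.
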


\begin{proof} 
The $j$-th component of the coefficient $(g\pw{k})_m$
is the matrix entry $(x_j\circ g\pw{k})_m$
of $M(1,g)^k$, so we want to show this is
a sum-function of the variable $k$.
Thus it is enough to show that each entry
$(n\circ (g\pw{k}))_m$ is a sum-function in $k$
(with coefficients in $K$).

Consider the hypothesis ($H_s$): \emph{that
the entry 
$(n\circ (g\pw{k}))_m$ is a sum-function in $k$
of degree less than $\deg(m)$ 
whenever $m,n\in S$ and $\deg m<s$.}

This holds for $s=2$, because the only monic monomials
of degree $1$ are the $x_j$ ($j=1,\ldots,d$),
and 
$$ (n\circ (g\pw{k}))_{x_j} = (n\circ\ONE)_{x_j} 
=\left\{
\begin{array}{rl}
1,& n=x_j,\\
0,& \textup{ otherwise}.
\end{array}
\right.
$$

Assume that $s\ge2$ and $H_s$ holds. We claim
that $H_{s+1}$ also holds.  To prove this, we have to show
that for each $m\in S$ of degree $s$, and each
$n\in S$, the entry 
$(n\circ (g\pw{k}))_m$ is a sum-function in $k$
of degree less than $s$.

Fix $m\in S$, of degree $s$. 

First, consider the case $n=x_j$, for some $j\in\{1,\ldots,d\}$,
and let $\alpha_k:= 
(x_j\circ (g\pw{k}))_m$.  
For $k\ge1$, we have
$$ \alpha_{k+1} = (x_j\circ g\circ g\pw{k}) = 
\sum_{p\in S} (x_j\circ g)_p \cdot (p\circ g\pw{k})_m.
$$
If $\deg(p)=0$ or $\deg(p)>s$, then $(p\circ g\pw{k})_m=0$.
If $\deg(p)=1$, then $p=x_i$ for some $i$,
and $(x_j\circ g)_p$ is equal to $1$ or $0$, depending
on whether or not $i=j$.
If $\deg(p)=s$, then $(p\circ g\pw{k})_m = p_m$ and equals
$1$ or $0$, depending on whether or not $p=m$. So
if we let
$$ T:= \{ p\in S: 1<\deg(p)<s \}, $$
and 
$\lambda_p:= (x_j\circ g)_p$, 
then
$$ \alpha_{k+1} = \alpha_k 
+ 
\sum_{p\in T} \lambda_p \cdot (p\circ g\pw{k})_m
+ \lambda_m. $$

Fix $p\in T$.  Since the degree of $p$ is at least $2$,
we can factor $p$ as $x_i\cdot q$ for some
$q\in S$, and then
$$ p\circ g\pw{k} = (x_i\circ g\pw{k})\cdot (q\circ g\pw{k}), $$
so
\begin{equation}\label{E:5}
 (p\circ g\pw{k})_m = 
\sum_{r|m} (x_i\circ g\pw{k})_r\cdot (q\circ g\pw{k})_{m/r}. 
\end{equation}
In this sum, the hypothesis $H_s$ tells us
that the terms are sum-functions in $k$, except perhaps
for the terms $r=1$ and $r=m$. The term with $r=1$
is zero (because $(x_i\circ g\pw{k})_1=0$), and
the term with $r=m$ is also zero, because
$(q\circ g\pw{k})_1=0$.  Thus
$(p\circ g\pw{k})_m$
is a sum-function in $k$.  

It follows that
$$ \alpha_{k+1} =\alpha_k + P(k), $$
where  $P$ is a sum-function in $t$.
Thus
$$ \alpha_k =\sum_{r=1}^{k-1} P(r) + \alpha_1 $$
is a sum-function in $k$, of degree at most $\deg(P)+1$.

It remains to show that the degree of $P$
is less than $\deg(m)$.  By the induction hypothesis,
the $r$-th term in the sum
in Equation (\ref{E:5}) is zero or is the product 
of a sum-function in $k$ of degree less than
$\deg(r)$ and a sum-function of degree less than
$\deg(m/r)$, so it has degree less than or equal to
$$ \deg(r)-1 + \deg(m/r) -1 = \deg(m)-1.$$

Thus $(n\circ g\pw{k})_m$ is a sum-function in $k$
when $n=x_j$. 

Now consider a monomial 
$n\in S$ of degree greater than $1$.  
We can factor $n$ as $x_i\cdot q$ for some $q\in S$
of degree at least $1$, and then
$$ 
(n\circ g\pw{k})_m =  
\sum_{r|m} (x_i\circ g\pw{k})_r\cdot (q\circ g\pw{k})_{m/r}. 
$$
As before, the terms with $r=1$ and $r=m$ have a zero factor,
hence equal zero.  In the nonzero terms, $\deg(r)<s$
and $\deg(m/r)<s$, so by the induction hypothesis
each such term is the product of a sum-function in $k$
of degree less than $\deg(r)$ and another of degree
less than $s/\deg(r)$.  Hence  
$(n\circ g\pw{k})_m$ is a sum-function in $k$ of degree
less than $s$.  

Thus $H_{s+1}$ holds.

By induction, $H_s$ holds for all $s$, and
the theorem is proven.
\end{proof}

From the proof, we note:
\begin{corollary}\label{C:to-T3}
Each component of $P(t)$ is a sum-function
of degree less than the degree of $m$,
and it depends only on the coefficients
$g_p$ of the monomials $p\in S$ of degree less
than or equal to the degree of $m$.
\qed\end{corollary}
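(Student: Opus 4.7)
The plan is to extract both claims directly from the inductive proof of Theorem~\ref{T:2}, strengthening its hypothesis $H_s$ so that it records coefficient-dependence alongside degree.

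The degree claim requires no extra work. The statement $H_s$ is already framed so that $(n\circ g\pw{k})_m$ is a sum-function in $k$ of degree strictly less than $\deg(m)$ whenever $\deg(m)<s$. Since the $j$-th component of the vector-valued coefficient $(g\pw{k})_m$ is exactly $(x_j\circ g\pw{k})_m$, taking $n=x_j$ transfers the degree bound directly to each component of $P(t)$.

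For the dependence claim, I would enlarge $H_s$ to read: \emph{whenever $\deg(m)<s$, the sum-function $(n\circ g\pw{k})_m$ depends only on those coefficients $g_p$ of $g$ with $\deg(p)\le\deg(m)$}. The base case $s=2$ is immediate, since $(n\circ\ONE)_{x_j}$ is either $0$ or $1$ and involves no coefficient of $g$. For the inductive step I trace every place in the original argument where a coefficient of $g$ enters. In the recursion
$$\alpha_{k+1} = \alpha_k + \sum_{p\in T}\lambda_p(p\circ g\pw{k})_m + \lambda_m,$$
the scalars $\lambda_p=(x_j\circ g)_p$ have $\deg(p)<\deg(m)$ for $p\in T$, while the separate $\lambda_m=(x_j\circ g)_m$ supplies the extreme case $\deg(p)=\deg(m)$. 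The factors $(p\circ g\pw{k})_m$ are expanded via (\ref{E:5}) into bilinear combinations of $(x_i\circ g\pw{k})_r$ and $(q\circ g\pw{k})_{m/r}$ with $\deg(r),\deg(m/r)<\deg(m)$, so the strengthened $H_s$ applies to each factor and shows each depends only on coefficients of degree strictly less than $\deg(m)$. The single initial value $\alpha_1=(x_j\circ g)_m$ involves only $g_m$, again of degree $\deg(m)$. The case of a monomial $n$ of degree greater than $1$ uses the same bilinear expansion and introduces no new coefficient of $g$, so the dependence is simply inherited from the two factors. This completes the strengthened inductive step.

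There is no genuine obstacle here; the argument is merely a bookkeeping augmentation of the existing induction. The one observation that makes it run is that every coefficient of $g$ ever encountered arises as some $(x_j\circ g)_p$ where $p$ is either the top-degree term $m$ itself (from the initial condition or the isolated $\lambda_m$), a scalar multiplier with $\deg(p)<\deg(m)$, or a symbol already hidden inside a recursively-handled factor; in all three situations $\deg(p)\le\deg(m)$.
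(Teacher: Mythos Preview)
Your proposal is correct and matches the paper's approach: the corollary is marked \qed\ immediately after ``From the proof, we note'', so the intended argument is precisely the bookkeeping inspection of the induction in Theorem~\ref{T:2} that you carry out explicitly. Your strengthening of $H_s$ to track coefficient-dependence, and your observation that the only coefficients of $g$ entering the recursion are the $\lambda_p=(x_j\circ g)_p$ with $\deg p\le\deg m$ and the initial value $\alpha_1=(x_j\circ g)_m$, are exactly the points one must verify.
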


In fact, the dependence on the coefficients
of $g$ is polynomial.

\subsection{Orders and degrees}
The order of an element of a group is the least power
of the element that equals the identity, or is
infinity if there is no such power. If we speak about \emph{the order} of 
a formal map $g\in\G$, this is what we mean.
For nonzero power series $f\in\F$, people sometimes use the term
`order' to refer to the least $k\in\Z_+$
such that some monomial of degree $k$ has a
nonzero coefficient, what could
informally be called the `order of vanishing'
of the series.  We refer to this
number as the \emph{lower degree}
of the series.  We take the lower degree of $0$
to be infinity. We extend this to $d$-tuples
$(f_1,\ldots,f_d)\in\F^d$, by defining the lower
degree to be the minimum of the lower degrees
of the components $f_j$.  For $g\in\G$, tangent to $\ONE$, we
refer to the lower degree of $g-\ONE$ as the
\emph{Weierstrass degree of $g$}.

\begin{proposition}
Suppose $K$ has characteristic zero.
If $g\in\G$ is tangent to the identity, and $g\not=\ONE$,
then $g$ has infinite order.
\end{proposition}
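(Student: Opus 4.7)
The plan is to track the lowest-degree nontrivial homogeneous piece through iteration. Let $w \geq 2$ be the Weierstrass degree of $g$ and write
$$ g = \ONE + H + R,$$
where $H := L_w(g - \ONE)$ is the (nonzero) homogeneous $d$-tuple of degree $w$ and $R$ has lower degree strictly greater than $w$. The key assertion will be that for every $k \geq 1$,
$$ g\pw{k} = \ONE + kH + R_k,$$
with $R_k$ a $d$-tuple of lower degree $> w$. Once this is in hand, the proposition follows at a stroke: in characteristic zero the integer $k$ is nonzero in $K$ for each $k \geq 1$, so $kH \neq 0$, and therefore $g\pw{k} - \ONE$ has Weierstrass degree exactly $w$, hence is nonzero.

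I would prove the assertion by induction on $k$. The case $k = 1$ is the defining decomposition. For the inductive step, decompose
$$ g\pw{(k+1)} = g \circ g\pw{k} = g\pw{k} + H \circ g\pw{k} + R \circ g\pw{k}$$
and analyse the lower degree of the two substituted pieces. Since $H$ is homogeneous of degree $w$ and $g\pw{k} = \ONE + (kH + R_k)$ with $kH + R_k$ of lower degree $\geq w$, substituting $x_i \mapsto x_i + (kH + R_k)_i$ in $H$ produces $H$ itself, together with a remainder in which at least one factor of degree one has been swapped for a factor of degree $\geq w$. Each such swap boosts the total degree by at least $w - 1 \geq 1$, so the remainder has lower degree $\geq w + 1$. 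Similarly $R \circ g\pw{k}$ has lower degree $\geq w + 1$, because $R$ already does and composition with a map tangent to the identity cannot decrease lower degree. Adding everything gives $g\pw{(k+1)} = \ONE + (k+1)H + R_{k+1}$ with $R_{k+1}$ of lower degree $> w$.

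The main obstacle is the careful degree bookkeeping inside $H \circ g\pw{k}$ at the inductive step; the rest is formal manipulation and the characteristic-zero hypothesis is used only at the very end to ensure $kH \neq 0$ in $K$.
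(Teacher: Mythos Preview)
Your proof is correct and follows the same approach as the paper's: both isolate the first nonzero homogeneous piece $H=L_w(g-\ONE)$ of $g-\ONE$ and show by induction that $g\pw{k}=\ONE+kH+\textup{(higher-order terms)}$, concluding via the characteristic-zero hypothesis that $kH\neq0$. Your version is somewhat more explicit about the degree bookkeeping in $H\circ g\pw{k}$, but the argument is the same.
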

\begin{proof}
We may write  
$$ g = \ONE + \sum_{k=r}^\infty L_k, $$
where $L_k=L_k(g)$ and $r\ge2$ is least with $L_r\not=0$.
Then
$$ 
\begin{array}{rcl}
g\pw2(x) &=& \ONE(g(x)) + L_r(g(x)) + \HOT
\\
&=& \ONE + L_r(x) + \HOT + L_r(\ONE) + \HOT
\\
&=& \ONE + 2L_r(x).
\end{array}
$$
Continuing inductively, we get
$$ g\pw{n} =\ONE + nL_r + \HOT. $$
Thus $g\pw{n}\not=\ONE$ for each $n\in\N$.
\end{proof}

In finite characteristic $c$, there are
maps tangent to the identity that
have finite order.  For instance,
in dimension one, take
$$ g(x) = \frac{x}{1+x}. $$
Then one calculates that
$$ g\pw{k}(x) = \frac{x}{1+kx}, $$
so $g\pw{c}(x)=x$, i.e. $g\pw{c}=\ONE$.
However, the argument in the proof of
the proposition shows the following:

\begin{proposition}
Suppose $K$ has characteristic $c>0$.
If $g\in\G$ is tangent to the identity,
then either $g$ has infinite order,
or the order of $g$ is divisible by $c$. 
\qed
\end{proposition}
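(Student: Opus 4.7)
The plan is to reuse the induction from the characteristic-zero proposition almost verbatim, paying attention only to what goes wrong (or rather, what must be true) when $g^{\circ n} = \ONE$ in finite characteristic. Assume $g \neq \ONE$ and write
$$ g = \ONE + L_r + \sum_{k>r} L_k, $$
where $r\geq 2$ is the Weierstrass degree of $g$, so $L_r = L_r(g)\neq 0$.

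Next I would verify, by induction on $n\geq 1$, that
$$ g^{\circ n} = \ONE + nL_r + \HOT, $$
where $\HOT$ denotes terms of lower degree strictly greater than $r$. This is exactly the calculation already performed in the previous proposition: expanding $g^{\circ(n+1)} = g\circ g^{\circ n}$, every term of lower degree $r$ in the composition comes either from the linear part of $g$ applied to the degree-$r$ part of $g^{\circ n}$ (contributing $nL_r$) or from $L_r$ evaluated at the linear part of $g^{\circ n}$ (contributing $L_r$); all other contributions are of strictly higher degree because both $g - \ONE$ and $g^{\circ n} - \ONE$ have lower degree at least $r\geq 2$. Adding these gives $(n+1)L_r$, closing the induction.

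Now suppose $g$ has finite order $n$, so $g^{\circ n} = \ONE$. Comparing the coefficient of lower-degree-$r$ terms in the identity $\ONE = \ONE + nL_r + \HOT$ forces $nL_r = 0$ in $\F^d$. Since $L_r\neq 0$, some coefficient $\lambda\in K$ of $L_r$ is nonzero, and $nL_r = 0$ means $\pi_K(n)\cdot\lambda = 0$ in $K$. Because $K$ is an integral domain and $\lambda\neq 0$, this forces $\pi_K(n) = 0$, i.e.\ $c \mid n$, as required. The whole argument is really just an observation rather than a new proof — the main work was already done, and the only step one needs to think about is that the conclusion $nL_r = 0$ no longer implies $n = 0$ in characteristic $c$, but only that $c\mid n$; there is no serious obstacle to overcome.
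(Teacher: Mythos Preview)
Your proof is correct and matches the paper's approach exactly: the paper itself offers no separate argument for this proposition, merely noting that ``the argument in the proof of the proposition shows the following'' and appending a \qed. You have simply made that remark explicit --- reproducing the induction $g^{\circ n}=\ONE+nL_r+\HOT$ and observing that in characteristic $c$ the conclusion $nL_r=0$ with $L_r\neq0$ forces $c\mid n$ rather than $n=0$ --- which is precisely what the paper intends.
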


\subsection{Iterates to nonintegral order}
In this subsection we consider the possibility
of defining iterates $g\pw\lambda$,
where $\lambda$ belongs to the ring $K$. 
This possibility arises mainly in 
characteristic zero.  In the following subsection,
we will consider an alternative procedure
in positive characteristic. 
 
For $g\in\G$, 
Theorem \ref{T:2} tells us that
if $g$ is tangent to the identity, then
for each monomial $m$, there is a sum-function
$P_m(t)$ over $K$ (depending on $g$), 
of degree less than  $\deg m$,
 such that $(g\pw{k})_m = P_m(k)$ for all $k\in\N$.

\begin{theorem}\label{T:3}
Let $g\in\G$ be tangent to the identity, and 
fix a monic monomial $m\in S$ of degree $s$. If
$s$ is not greater than the cardinality
of $\Z_K$, then there exists a unique
polynomial $P_m(t)\in \hat K[t]$ of degree less than $s$
such that
$$  P_m(k) = 
 (g\pw{k})_m \ \forall k\in\Z_+.
$$
\end{theorem}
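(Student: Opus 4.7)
My plan is to combine Theorem \ref{T:2} with explicit polynomial lifts of the basic sum-functions, reducing the statement to a uniqueness/existence question for polynomials over the field $\hat K$. By Theorem \ref{T:2} together with Corollary \ref{C:to-T3}, each of the $d$ components of $(g\pw k)_m$ is a sum-function in $k$ of degree less than $s=\deg(m)$, so we may write
$$(g\pw k)_m = \sum_{i=0}^{s-1}\lambda_i\,\rho_i(k)\qquad (k\in\Z_+),$$
component-wise, with coefficients $\lambda_i\in K$. It therefore suffices to exhibit, for each basic sum-function $\rho_i$ with $i<s$, a polynomial $\tilde\rho_i(t)\in\hat K[t]$ of degree $i$ such that $\tilde\rho_i(k)=\rho_i(k)$ for every $k\in\Z_+$, and then to verify uniqueness of the resulting $P_m(t)=\sum_{i=0}^{s-1}\lambda_i\tilde\rho_i(t)$.

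For existence I would take the natural candidate
$$\tilde\rho_i(t) := \frac{(t+1)(t+2)\cdots(t+i)}{i!},$$
interpreting its coefficients in $\hat K$ via the extension of $\pi_K$ to the field of fractions of $\Z_K$. The delicate point is that we need $i!$ to be invertible in $\hat K$, i.e.\ nonzero: in characteristic zero this is automatic, and in characteristic $c>0$ it is equivalent to $i<c=|\Z_K|$, which holds since $i<s\le|\Z_K|$. For any $k\in\Z_+$, substituting $\pi_K(k)$ into $\tilde\rho_i(t)$ and using that $\pi_K$ is a ring homomorphism gives $\tilde\rho_i(k)=\pi_K\!\left(\binom{i+k}{i}\right)=\rho_i(k)$ by Proposition \ref{P:Sigma_K-description}(1). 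Thus $P_m(t)=\sum_{i=0}^{s-1}\lambda_i\tilde\rho_i(t)\in\hat K[t]$ has degree less than $s$ and satisfies $P_m(k)=(g\pw k)_m$ on $\N$; the case $k=0$ is a direct check (for $m$ of degree $\ge 2$ both sides vanish, and for $m=x_j$ both sides are constant in $k$).

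For uniqueness, suppose $Q_m\in\hat K[t]$ has degree less than $s$ and also agrees with $(g\pw k)_m$ on $\Z_+$. Then $P_m-Q_m$ is a polynomial over $\hat K$ of degree less than $s$ vanishing on the image $\pi_K(\Z_+)=\Z_K\subset\hat K$; since $\hat K$ is a field and $|\Z_K|\ge s$, this polynomial has at least $s$ distinct roots, hence must be zero. The principal obstacle across the argument is really a single point, handled by the hypothesis $s\le|\Z_K|$: in positive characteristic it simultaneously guarantees that the factorials $i!$ for $i<s$ remain invertible in $\hat K$ (so the polynomial lifts exist) and that we have $s$ distinct evaluation points in $\hat K$ to pin down a polynomial of degree less than $s$ (so the lift is unique).
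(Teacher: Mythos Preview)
Your proof is correct and follows essentially the same line as the paper: both arguments rest on the fact that a polynomial of degree less than $s$ over the field $\hat K$ is determined by its values at $s$ distinct points of $\Z_K$, which the hypothesis $s\le|\Z_K|$ guarantees. The paper's proof is extremely terse and focuses on uniqueness, taking the existence of a polynomial representative of the sum-function more or less for granted; you make this explicit by writing down the lifts $\tilde\rho_i(t)=(t+1)\cdots(t+i)/i!$ and observing that $s\le|\Z_K|$ is precisely what forces $i!$ to be nonzero in $\hat K$ for $i<s$. This is a genuine improvement in clarity, particularly in positive characteristic, where the paper has earlier noted that sum-functions are \emph{not} in general given by polynomials, so the existence step deserves the attention you give it.
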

\begin{proof}
$P_m(t)$ is a polynomial over the field $\hat K$, so it is
uniquely determined by its values at $\deg(P_m)+1$
distinct elements of $K$.  The elements $0_K$,
$1_K$,$\ldots$, $s\cdot1_K$ are distinct,
so the result follows.
\end{proof}

\begin{corollary}\label{C:3}
If $K$ has characteristic zero, then 
for each $m\in S$, the
 polynomial $P_m$ is uniquely determined by $g$.
\end{corollary}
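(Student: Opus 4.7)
The plan is to derive this essentially as an immediate consequence of Theorem~\ref{T:3}. The only hypothesis in that theorem is a size constraint, namely that $s=\deg m$ be no greater than the cardinality of $\Z_K$. In characteristic zero, the map $\pi_K:\Z\to K$ is injective, so $\Z_K\cong\Z$ is infinite. Hence the cardinality hypothesis is automatically satisfied for every $m\in S$, and Theorem~\ref{T:3} supplies a polynomial $P_m(t)\in\hat K[t]$ of degree less than $s$ with $P_m(k)=(g\pw k)_m$ for all $k\in\Z_+$, unique among polynomials of degree less than~$s$.

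To obtain uniqueness among \emph{all} polynomials in $\hat K[t]$ (not merely those of degree less than $s$), I would invoke the standard fact that two polynomials over a field that agree on an infinite subset must coincide. Concretely, if $Q(t)\in\hat K[t]$ is any polynomial with $Q(k)=(g\pw k)_m$ for all $k\in\Z_+$, then $Q-P_m$ vanishes on the infinite set $\pi_K(\Z_+)\subset\hat K$, hence is the zero polynomial in $\hat K[t]$. Thus $Q=P_m$.

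Combining these two observations shows that $P_m$ is determined by $g$ alone. Since there is no genuine obstacle here beyond checking the cardinality hypothesis, the proof is essentially a one-line invocation of Theorem~\ref{T:3} together with the polynomial identity theorem over the field $\hat K$. The only subtlety worth flagging is the distinction between the given values of $P_m$ at $0_K,1_K,2_K,\ldots$ (elements of $K$) and at $0,1,2,\ldots$ (elements of $\Z_+$): in characteristic zero these are in bijective correspondence under $\pi_K$, which is what makes the reduction work cleanly.
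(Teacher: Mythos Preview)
Your proposal is correct and matches the paper's approach: the corollary is stated without proof, being an immediate consequence of Theorem~\ref{T:3} once one observes that in characteristic zero $\Z_K$ is infinite. Your additional remark on uniqueness among \emph{all} polynomials (not just those of degree less than $s$) is a correct and worthwhile clarification, though the paper does not make it explicit.
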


\begin{definition} Suppose $K$ has characteristic zero.
For $g\in\G$, tangent to the identity, and $\alpha\in K$,
we define the \emph{$\alpha$-th iterate $g\pw{\alpha}$ of $g$}
by the formula
$$ g\pw{\alpha}:= \sum_{m\in S} P_m(\alpha)\cdot m, $$
\end{definition}
This might be referred to as the formal-formal iterate,
because it has to do with formal composition, and
the order of iteration is also `formal'.

The series $g\pw{\alpha}$ is a power series over 
the $\rho$-intact envelope $K_\rho$, as opposed to
$K$. We can, if we wish, extend the definition
to allow any $\alpha$ belonging to $K_\rho$,
$K\Q_K$, $\hat K$, or any $\rho$-intact domain that contains
$K$. In case $K$ is the ring of integers, 
the ring of $p$-adic integers or any
field of characteristic zero, the formal-formal
iterate $g\pw{\alpha}$ belongs to the original
group $\G=\G_K$.  

Note that $g\pw{(\pi_K(k))}=g\pw{k}$ for $k\in\Z$.

\begin{theorem}\label{T:4}
Suppose $K$ has characteristic zero. Then
if $g\in\G$ is tangent to the identity, 
we have
$$ g\pw{a}\circ g\pw{b} = g\pw{a+b} $$
whenever $a,b\in K$.
\end{theorem}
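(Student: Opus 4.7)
The plan is to verify the identity one monomial coefficient at a time, reducing to the known integer semigroup law $g\pw{k}\circ g\pw{l}=g\pw{k+l}$ via a polynomial identity theorem over $\hat K$. Throughout, $g\pw{\alpha}$ lies in $\M^d$ over $K_\rho$; one first notes that since $P_{x_j}$ and $P_1$ are forced by their integer values to be the constant polynomials $e_j$ and $0$ respectively, $g\pw{\alpha}$ is tangent to the identity for every $\alpha\in K$, and the composition $g\pw{a}\circ g\pw{b}$ is well-defined.

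Fix $m\in S$ and a component index $i\in\{1,\ldots,d\}$. Unfolding the composition formula shows that $\bigl((g\pw{a}\circ g\pw{b})_m\bigr)_i$ is a finite polynomial combination of scalars of the form $(g\pw{a})_n$ and $(g\pw{b})_p$, where $n$ and $p$ range over finitely many monomials of degree at most $\deg m$. By Theorem \ref{T:3} and Corollary \ref{C:3}, each such scalar is a polynomial in $a$ (respectively $b$) over $\hat K$. Hence there is a polynomial $Q_{i,m}(s,t)\in\hat K[s,t]$, depending only on $g$ and $m$, with
$$ \bigl((g\pw{a}\circ g\pw{b})_m\bigr)_i = Q_{i,m}(a,b)\qquad\textup{for all }a,b\in K. $$

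Next, for $k,l\in\Z_+$ the integer semigroup law yields $Q_{i,m}(k,l)=P_{i,m}(k+l)$, where $P_{i,m}$ is the $i$-th component of the polynomial from Theorem \ref{T:3}. Thus $Q_{i,m}(s,t)$ and $P_{i,m}(s+t)$ are elements of $\hat K[s,t]$ that agree on $\pi_K(\Z_+)\times\pi_K(\Z_+)$. Because $K$ has characteristic zero, $\pi_K$ is injective, so this is an infinite and hence Zariski-dense subset of $\hat K^2$, and the two polynomials coincide. Evaluating at $(a,b)\in K^2$ gives $\bigl((g\pw{a}\circ g\pw{b})_m\bigr)_i = P_{i,m}(a+b) = \bigl((g\pw{a+b})_m\bigr)_i$ for every $i$ and $m$, whence $g\pw{a}\circ g\pw{b}=g\pw{a+b}$.

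The main obstacle is the first step: justifying polynomial dependence of the coefficient on $(a,b)$. This is a finite but intricate bookkeeping exercise, requiring one to check that only finitely many $n$ with $\deg n\le\deg m$ contribute, that for each such $n$ the quantity $(n\circ g\pw{b})_m$ depends on only finitely many coefficients of $g\pw{b}$ of bounded degree, and that each of those is in turn a polynomial in $b$ by Corollary \ref{C:to-T3}. Once this representability is in hand, the rest is a clean application of Zariski-density, which is available precisely because characteristic zero makes $\pi_K(\Z_+)\subset\hat K$ infinite.
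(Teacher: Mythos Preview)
Your proof is correct and takes essentially the same approach as the paper: both verify the identity coefficient-by-coefficient by observing that each side is a polynomial over $\hat K$ in the iteration parameters, agreeing on $\pi_K(\Z_+)$, and then invoking the polynomial identity principle available in characteristic zero. The only cosmetic difference is that the paper proceeds one variable at a time (fix $a\in\N$, extend in $b$; then fix $b\in K$, extend in $a$) while you invoke Zariski density of $\pi_K(\Z_+)^2$ in $\hat K^2$ in a single step, and you are more explicit than the paper about why $g\pw{\alpha}$ is tangent to the identity and why each coefficient depends polynomially on $(a,b)$.
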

\begin{proof}
If a polynomial equation holds at more
points than the degree of the polynomial,
then it holds identically.
The identity
\begin{equation}\label{E:6}
 g\pw{a}\circ g\pw{b} = g\pw{a+b} 
\end{equation}
holds for all $a\in\N$ and $b\in\N$. As a result,
for each $m\in S$ and each index $j\in\{1,\ldots,d\}$,
we have
$$ (x_j\circ (g\pw{a}\circ g\pw{b}))_m = 
(x_j\circ(g\pw{a+b}))_m $$
whenever $a\in\N$ and $b\in\N$.  For fixed $a\in\N$,
this is a polynomial identity in $b$,
so since it holds for all $b\in\N$, it holds
identically for all $b\in K$.  Then, for fixed
$b\in K$ it is a polynomial identity in $a$,
and in the same way it must hold
identically for all $a\in K$.  
Since this holds for all $m\in S$,  all
coefficients agree in the expansion of
the two sides of Equation (\ref{E:6}), and
hence the equation holds for all $a,b\in K$.
\end{proof}

So in characteristic zero, we have
the following fact:
\begin{corollary}\label{C:4-1}
For each $a\in K$, $g\pw{a}$ commutes with $g$,
and $a\mapsto g\pw{a}$ is a group homomorphism
from the abelian group $(K,+)$ into the centraliser of $g$ in $\G_{K_\rho}$.
\qed
\end{corollary}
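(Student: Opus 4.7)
The plan is to derive Corollary \ref{C:4-1} almost entirely from Theorem \ref{T:4}, with a couple of small verifications to fit things into the stated framework.

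First, I would verify the two ``boundary'' identities needed to turn Theorem \ref{T:4} into a homomorphism statement: namely $g\pw{0_K}=\ONE$ and $g\pw{1_K}=g$. For the first, the defining formula gives $g\pw{0_K}=\sum_m P_m(0_K)\cdot m$, and since $P_m(k)=(g\pw{k})_m$ for $k\in\Z_+$ we have $P_m(0_K)=\ONE_m$, so $g\pw{0_K}=\ONE$. For the second, the remark $g\pw{(\pi_K(k))}=g\pw{k}$ immediately following the definition gives $g\pw{1_K}=g\pw{1}=g$.

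Next, I would deduce the commutation with $g$ directly from Theorem \ref{T:4}. Taking $b=1_K$ and then $a=1_K$, we get
\[
g\pw{a}\circ g \;=\; g\pw{a}\circ g\pw{1_K} \;=\; g\pw{a+1_K} \;=\; g\pw{1_K+a} \;=\; g\pw{1_K}\circ g\pw{a} \;=\; g\circ g\pw{a},
\]
which says precisely that $g\pw{a}$ lies in the centraliser of $g$. The homomorphism property $g\pw{a+b}=g\pw{a}\circ g\pw{b}$ is Theorem \ref{T:4} itself, and combined with $g\pw{0_K}=\ONE$ this establishes that $a\mapsto g\pw{a}$ is a group homomorphism from $(K,+)$.

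The only substantive thing left is to check that $g\pw{a}$ actually lies in the group $\G_{K_\rho}$, not merely in the semigroup $\M^d_{K_\rho}$. For this I would show it is tangent to the identity, which suffices by the earlier proposition on invertibility. The coefficient of $x_j$ in the $i$-th component of $g\pw{k}$ equals $\delta_{ij}$ for every $k\in\Z_+$ (this is exactly the base case $s=2$ in the proof of Theorem \ref{T:2}), so the corresponding polynomial $P_{x_j}$ is the constant $\delta_{ij}$; evaluating at $\alpha=a$ gives $L(g\pw{a})=\ONE$. Hence $g\pw{a}\in\G_{K_\rho}$, with compositional inverse $g\pw{-a}$ provided automatically by the homomorphism property. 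I do not foresee any real obstacle here; the work has all been done in Theorems \ref{T:2} and \ref{T:4}, and this corollary is essentially an unpacking of what ``$a\mapsto g\pw{a}$ a homomorphism into $\G_{K_\rho}$'' means.
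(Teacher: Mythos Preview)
Your proposal is correct and takes the same approach as the paper: the paper marks this corollary with an immediate \qed after Theorem~\ref{T:4}, treating it as a direct consequence of the identity $g\pw{a}\circ g\pw{b}=g\pw{a+b}$. You have simply spelled out the easy verifications (the values at $0_K$ and $1_K$, and that $g\pw{a}$ is tangent to the identity hence invertible) that the paper leaves implicit.
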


This allows us to give:
\begin{proof}[Proof of Theorem \ref{T:main-char-zero}]
The homomorphism $a\mapsto g\pw{a}$ is injective,
because $L_k(g\pw{a})=aL_k(g)$ when $k$ is the
Weierstrass degree of $g$.
\end{proof}

Theorem \ref{T:4} has other interesting
consequences:
\begin{corollary}\label{C:4-2}
The coefficients of the compositional inverse $g\pw{-1}$
are given by
$P_m(-1)$.\qed
\end{corollary}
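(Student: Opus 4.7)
The plan is to observe that the statement is really a compatibility assertion between two \emph{a priori} different objects both denoted $g\pw{-1}$: on the one hand, the compositional (group-theoretic) inverse of $g$ in $\G$, and on the other hand the formal-formal iterate $g\pw{\alpha}=\sum_m P_m(\alpha)\cdot m$ specialised at $\alpha=-1_K\in K$. Since compositional inverses in $\G$ are unique, it suffices to verify that the latter series, when composed with $g$, yields $\ONE$.

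First I would note the two boundary evaluations. Since $P_m(k)=(g\pw{k})_m$ for every $k\in\Z_+$ (Theorem~\ref{T:2}, Theorem~\ref{T:3}), specialising at $k=0$ and $k=1$ gives $P_m(0)=\ONE_m$ and $P_m(1)=g_m$ for every $m\in S$. Consequently, as formal-formal iterates, $g\pw{0}=\sum_m P_m(0)\cdot m=\ONE$ and $g\pw{1}=\sum_m P_m(1)\cdot m=g$.

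Next I would invoke Theorem~\ref{T:4}, which is available because $K$ has characteristic zero and the identity $g\pw{a}\circ g\pw{b}=g\pw{a+b}$ holds for all $a,b\in K$. Setting $a=-1_K$ and $b=1_K$ yields
\[
 g\pw{-1}\circ g \;=\; g\pw{-1}\circ g\pw{1} \;=\; g\pw{0} \;=\; \ONE,
\]
so the series $\sum_m P_m(-1)\cdot m$ is a left compositional inverse of $g$ in $\G_{K_\rho}$. A symmetric computation with $a=1_K$, $b=-1_K$ gives it as a right inverse as well. By uniqueness of inverses in the group $\G_{K_\rho}$, this series is the compositional inverse of $g$, so its $m$-th coefficient is $P_m(-1)$, as claimed.

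There is no real obstacle here; the entire content has been packaged into Theorem~\ref{T:4}. The only point that deserves a moment's attention is making sure we may apply that theorem with $a=-1$ (rather than some $a\in\N$): this is exactly the extension established in the proof of Theorem~\ref{T:4} via the polynomial-identity argument, and it is the reason characteristic zero is used.
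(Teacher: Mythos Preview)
Your argument is correct and is exactly the approach the paper intends: the corollary is stated immediately after Theorem~\ref{T:4} with nothing but a \qed, so the paper regards it as a direct specialisation $a=-1$, $b=1$ of that theorem, together with uniqueness of inverses in $\G$. One tiny remark: the identity $P_m(0)=\ONE_m$ (equivalently $g\pw{0}=\ONE$) is not quite contained in the statement of Theorem~\ref{T:2} as written (which asserts agreement for $k\in\N$), but it follows either by the idempotent argument $g\pw{0}\circ g\pw{0}=g\pw{0}$ with $g\pw{0}$ tangent to $\ONE$, or by noting that the proof of Theorem~\ref{T:2} actually works for all $k\in\Z_+$.
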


\begin{corollary}
If $g,h\in\G$ are tangent to the identity and commute, then
$$ g\pw{a}\circ h\pw{b} = h\pw{b}\circ g\pw{a}, \ \forall a,b\in K.$$
\qed\end{corollary}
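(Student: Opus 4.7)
The plan is to mimic the polynomial-identity argument used at the end of the proof of Theorem \ref{T:4}. In characteristic zero, $\pi_K(\Z_+)$ is an infinite subset of $K$, so any polynomial in one or two variables over $\hat K$ that vanishes at all pairs of nonnegative integers is identically zero. It therefore suffices to establish the commutation identity at all $(a,b)\in\Z_+\times\Z_+$ and then to apply the polynomial-identity principle twice, exactly as in that earlier proof.

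First I would show that all nonnegative integer iterates of $g$ and $h$ commute. A single induction on $k\in\Z_+$, based on the hypothesis $g\circ h=h\circ g$, shows $g\circ h\pw{k}=h\pw{k}\circ g$; a further induction on $j\in\Z_+$ then gives $g\pw{j}\circ h\pw{k}=h\pw{k}\circ g\pw{j}$. The base cases $j=0$ and $k=0$ are trivial because $g\pw{0}=h\pw{0}=\ONE$.

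Next, fix $m\in S$ and a component index $i\in\{1,\ldots,d\}$. By Theorem \ref{T:2} together with Corollary \ref{C:to-T3}, for each monic monomial $n$ of degree at most $\deg m$ the coefficient $(x_i\circ g\pw{a})_n$ is a polynomial in $a$ over $\hat K$ of degree less than $\deg m$, and similarly for $(x_i\circ h\pw{b})_n$. Expanding the composition $(x_i\circ(g\pw{a}\circ h\pw{b}))_m$ by the convolution formula for coefficients of a composition yields only finitely many products of such coefficients, so each side of the desired identity has $m$-th coefficient a polynomial in $(a,b)$ over $\hat K$ of bounded total degree. These two polynomials agree on the infinite set $\Z_+\times\Z_+$ by the previous paragraph, so for each fixed $a\in\Z_+$ the polynomial identity in $b$ extends from $\Z_+$ to all of $K$, and then for each fixed $b\in K$ the polynomial identity in $a$ extends from $\Z_+$ to all of $K$.

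The only real subtlety is the bookkeeping: one must verify that the $m$-th coefficient of $g\pw{a}\circ h\pw{b}$ is produced from finitely many coefficients of $g\pw{a}$ and $h\pw{b}$ by polynomial operations, so that the resulting expression in $(a,b)$ truly is a polynomial of bounded degree over $\hat K$. This is exactly the finiteness observation that underlies the definition of formal composition and that is already exploited in the proofs of Theorem \ref{T:2} and Theorem \ref{T:4}, and I do not foresee any conceptual obstacle beyond it.
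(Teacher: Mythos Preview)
Your proposal is correct and follows exactly the approach the paper intends: the corollary is marked \qed\ because it is meant to follow immediately by the same polynomial-identity argument used in Theorem~\ref{T:4}, with the commutation $g\pw{j}\circ h\pw{k}=h\pw{k}\circ g\pw{j}$ for $j,k\in\Z_+$ replacing the semigroup law $g\pw{j}\circ g\pw{k}=g\pw{j+k}$ as the input identity. Your write-up simply supplies the details the paper omits, including the routine observation that each coefficient of $g\pw{a}\circ h\pw{b}$ is a polynomial in $(a,b)$ over $\hat K$.
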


In particular, if $K$ is $\rho$-intact, then the map
$$ (a,b) \mapsto g\pw{a}\circ h\pw{b} $$
gives a group homomorphism from $(K^2,+)$ into
the centraliser $C_\G(g)$ whenever $g$  and $h$ are commuting elements
of $\G$ that are tangent to the identity.  It is obviously
of interest to know when this map is injective, so the following proposition
is worth noting:

\begin{proposition}
Let $g,h\in\G$ be tangent to the identity. If there exists
$p\in K^\times$ such that $h\pw{p}=g$, then $g\pw{1/p}=h$, and
$$ \{h\pw{a} : a\in K\} = \{g\pw{a}: a\in K\} .$$
\end{proposition}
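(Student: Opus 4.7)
The plan is to reduce the proposition to a single ``chain rule'' for formal-formal iteration, namely the identity
\[
 (h\pw{p})\pw{q} = h\pw{pq}, \quad \forall p,q\in K,
\]
and then specialise to $q=1/p$. I would first verify that $h\pw{p}$ is itself tangent to the identity, so that $(h\pw{p})\pw{q}$ is even defined: since $h$ is tangent to $\ONE$, the sum-function $P_{x_j}^h(t)$ attached to a first-degree monomial has degree less than $1$ by Corollary \ref{C:to-T3}, hence is the constant vector $e_j$; evaluating at $p$ shows $L(h\pw{p})=\ONE$.

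To prove the chain rule, fix $p\in K$ and a monomial $m\in S$, and compare the two polynomials $P^{h\pw{p}}_m(t)$ and $P^h_m(pt)$ in $\hat K[t]$. By Theorem \ref{T:4} applied inductively, $(h\pw{p})\pw{k} = h\pw{kp}$ for every positive integer $k$ (this is just $k$-fold composition of $h\pw p$ with itself, collapsing via the semigroup law). Reading off the $m$-coefficient gives $P^{h\pw{p}}_m(k) = P^h_m(kp)$ for all $k\in\N$. Since two polynomials over the field $\hat K$ that agree on the infinite set $\N\subset K$ must be identical, we obtain $P^{h\pw{p}}_m(q) = P^h_m(pq)$ for every $q\in K$. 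Summing over $m\in S$ yields $(h\pw{p})\pw{q} = h\pw{pq}$, as claimed.

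Finally, apply the chain rule with $q:= p^{-1}\in K$ to get
\[
 g\pw{1/p} = (h\pw{p})\pw{1/p} = h\pw{1} = h.
\]
For the set equality, the chain rule gives $g\pw{a} = (h\pw{p})\pw{a} = h\pw{pa}$ for each $a\in K$, and since multiplication by the unit $p$ is a bijection of $K$, the parameters $pa$ range over all of $K$ as $a$ does; this yields $\{g\pw{a}:a\in K\} = \{h\pw{pa}:a\in K\} = \{h\pw{a}:a\in K\}$.

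The only non-routine step is the chain rule itself, but it dissolves into the standard polynomial-identity argument once Theorem \ref{T:4} is invoked for integer exponents; I expect no essential obstacle beyond making sure one stays inside $K_\rho$-coefficient series so that all the formal-formal iterates are well-defined simultaneously.
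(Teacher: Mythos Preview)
Your argument is correct. The paper actually states this proposition without proof, so there is nothing to compare against directly; the author evidently regards it as an immediate consequence of Theorem~\ref{T:4}, and your derivation makes that consequence explicit.

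Your route via the chain rule $(h\pw{p})\pw{q}=h\pw{pq}$ is the natural one. The key step---passing from the integer identity $(h\pw{p})\pw{k}=h\pw{kp}$ (which is just Theorem~\ref{T:4} iterated) to arbitrary $q\in K$ by the polynomial-identity argument over $\hat K$---is exactly the same mechanism used in the proof of Theorem~\ref{T:4} itself, so it fits the paper's style perfectly. Two small remarks: first, for the proposition as stated you do not even need to worry about $K_\rho$, since the hypothesis $h\pw{p}=g\in\G_K$ already puts all coefficients back in $K$, so $P^{h\pw{p}}_m=P^g_m\in\hat K[t]$ with no enlargement of the base ring required; second, your verification that $h\pw{p}$ is tangent to the identity is correct but could be shortened to the observation that $L(h\pw{a})=\ONE$ for all $a\in K$ simply because the degree-one coefficients $P^h_{x_j}$ are constant (degree~$0$) sum-functions equal to $e_j$.
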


Thus, if we think of $\{h\pw{a}:a\in K\}$ as a `one-parameter subgroup'
(where the `parameter' runs over $K$) of $\G$, then two one-parameter
subgroups in the centraliser of a $g$ 
(tangent to but not equal to $\ONE$)
either coincide or give
a `two-parameter subgroup'.

\begin{conjecture}
We conjecture that if $K$ is a field of characteristic 
zero and $g\in\G$ is
tangent to but not equal to the identity,
then the centraliser 
$C_\G(g)$ 
of $g$ in $\G$ 
is abelian, and is the inner direct product of its torsion
subgroup and a finite number of one-parameter groups of
iterates  $\{h\pw{a}:a\in K\}$.  We expect that, generically
the centraliser will be just $\{g\pw{a}:a\in K\}$, and that
the occurrence of a two-parameter subgroup corresponds to
the possibility of conjugating $g$ to a product map,
	i.e. a formal map of the form
	$(h(x_1,\cdots,x_m),k(x_{m+1},\ldots,x_d))$,
	where $h\in\G_m$ and $k\in\G_{d-m}$
	for some $m\in\N$ with $1\le m<d$.

The conjecture holds in dimension $d=1$ \cite{OFS}.
\end{conjecture}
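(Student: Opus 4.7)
The plan is to convert the group-theoretic centraliser problem into a Lie-algebraic one by way of the infinitesimal generator. In characteristic zero, the one-parameter family $a\mapsto g\pw{a}$ provided by Corollary \ref{C:4-1} differentiates at $a=0$ to a formal vector field $X$ on $K^d$ vanishing to order at least the Weierstrass degree of $g$. Writing $\LA$ for the Lie algebra (over $K$) of formal vector fields on $K^d$ vanishing to order at least $2$ at the origin, and $\G_0$ for the subgroup of $\G$ of elements tangent to the identity, the time-$1$ flow map $\exp:\LA\to\G_0$ is a bijection. Because the characteristic is zero and the filtration of $\LA$ by order of vanishing is pronilpotent, the Baker--Campbell--Hausdorff series converges formally, so $\exp(Y)\circ\exp(Z)=\exp(Z)\circ\exp(Y)$ if and only if $[Y,Z]=0$. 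Hence $C_\G(g)\cap\G_0=\exp(Z_\LA(X))$, where $Z_\LA(X):=\{Y\in\LA:[X,Y]=0\}$.

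Under this correspondence, the conjecture reduces to two assertions: that $Z_\LA(X)$ is abelian, and that it is a finite-dimensional $K$-vector space, with dimension equal to the number of one-parameter iterate subgroups of $C_\G(g)$. For elements of $C_\G(g)$ not tangent to the identity, I would invoke the linear-part homomorphism $L:C_\G(g)\to\GL(d,K)$: any commuting $h$ must have $L(h)$ commuting not only with $L(g)=\ONE$ but, at each homogeneous level, with the corresponding homogeneous part of $g$, so the image of $L$ is constrained by the leading term of $g-\ONE$. Combined with the earlier observation that every nontrivial element of $\G_0$ has infinite order in characteristic zero, one sees that $C_\G(g)\cap\G_0$ is torsion-free, so any torsion in $C_\G(g)$ splits off via $L$ to produce the inner direct product structure, with the torsion summand isomorphic to a finite subgroup of $\GL(d,K)$.

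In dimension one, $Z_\LA(X)=K\cdot X$ for every nonzero $X\in\LA$, which is one-dimensional and abelian; this recovers the $d=1$ case proved in \cite{OFS}. In higher dimension the approach would be to put $X$ into a formal normal form and read $Z_\LA(X)$ off directly. The conjectured interpretation becomes transparent: a normal form of product type $X=X_1(x_1)\partial_{x_1}+\cdots+X_d(x_d)\partial_{x_d}$ has centraliser spanned by its $d$ commuting factors, giving $d$ independent one-parameter iterate subgroups of $C_\G(g)$ and corresponding to a formal conjugation of $g$ into a product series.

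The hard part is the normal-form step. The classical Poincar\'e--Dulac theorem uses a semisimple linear part to handle resonances, whereas here the linear part of $X$ vanishes, so one must work instead with the grading of $\LA$ by degree and analyse the nilpotent operator $\textup{ad}\,X$ on each homogeneous component. Establishing both finite-dimensionality and abelianness of $Z_\LA(X)$ appears to require either a Jordan-type decomposition for $\textup{ad}\,X$ on the full infinite-dimensional $\LA$, or a delicate induction on Weierstrass degree that coherently lifts commuting directions from order to order. It is precisely this control that is currently missing in dimension $d\ge 2$ and that leaves the conjecture open.
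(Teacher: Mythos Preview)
The statement you have attempted is a \emph{Conjecture} in the paper, not a theorem; the paper offers no proof, only the remark that the case $d=1$ is established in \cite{OFS}. Your proposal is likewise not a proof but a strategy outline, and you say so explicitly in your final paragraph. So there is no paper-proof against which to compare.

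That said, a couple of remarks on your strategy. The $\exp$/$\log$ correspondence between $\G_0$ and $\LA$ and the reduction of commutation in $\G_0$ to vanishing brackets via BCH is sound in this pronilpotent, characteristic-zero setting, and it is a natural line of attack. However, your reduction does not fully capture the conjecture. You identify $C_\G(g)\cap\G_0$ with $\exp(Z_\LA(X))$, but the conjecture concerns all of $C_\G(g)$, and your treatment of the complement is incomplete: you only discuss elements $h$ whose linear part has finite order (so that some $h\pw{s}\in\G_0$) and torsion elements, yet nothing in the setup excludes $h\in C_\G(g)$ with $L(h)$ of infinite order in $\GL(d,K)$. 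Such $h$ are neither torsion nor tangent to the identity, and your splitting argument via $L$ does not show they fit into the asserted inner direct product structure, nor that they commute with everything else. Even granting that $Z_\LA(X)$ is abelian and finite-dimensional, you would still need a separate argument to control the image of $C_\G(g)$ under $L$ and to show that the extension $1\to C_\G(g)\cap\G_0\to C_\G(g)\to L(C_\G(g))\to 1$ splits compatibly with the claimed decomposition.
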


\subsection{$c$-adic iterates}
Throughout this section, we suppose
that $K$ has positive characteristic $c$.

There does not appear to be any reasonable way to
define iterates $g\pw{\lambda}$ for $\lambda\in K$
in characteristic $c$, but we can extend the scope
of iteration in another way.

We impose the $\M$-adic valuation topology on $\F$, i.e. the topology
induced by any translation-invariant metric that has
$$   \dist(m\cdot f, 0) = 2^{-\deg m} $$
whenever $m\in S$ and $f\in\F^\times$. Thus a sequence
$(f^{(n)})_n$ of power series over $K$ converges to zero
if and only if 
$$ \forall m\in S,\ \exists N\in\N: n>N\implies (f^{(n)})_m=0, $$
i.e. the coefficient of each monomial in the expansion
of $f^{(n)}$ is eventually zero.
We use the $\M$-adic valuation topology on $\F$
to generate a product topology on $\F^d$, and
restrict it to $\M^d$ and $\G$. We refer to all these
as $\M$-adic valuation topologies.

\begin{theorem}\label{T:c-adic-action}
Suppose $g\in G$ has $L(g)=\ONE$. Then there is
a continuous group homomorphism 
$z\mapsto g\pw{z}$  from $(\Z_c,+)$ (the additive
group of the $c$-adic integers) into $\G$ that extends the
iteration map $k\mapsto g\pw{k}$ from $\Z$. 
\end{theorem}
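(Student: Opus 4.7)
The strategy is to exploit the corollary to Theorem~\ref{T:sum-fn}: in characteristic $c>0$, every sum-function is periodic with period a power of $c$. Combined with Theorem~\ref{T:2}, which says each coefficient $k\mapsto(x_j\circ g\pw{k})_m$ is such a sum-function, this means every coefficient factors through some finite quotient $\Z/c^r\Z$ of $\Z$, and hence admits a canonical continuous extension along the natural surjection $\Z_c\twoheadrightarrow\Z/c^r\Z$. Assembling these extensions monomial-by-monomial should produce $g\pw{z}$ for each $z\in\Z_c$; I then need to check that the resulting formal series lies in $\G$, extends the integer iterates, depends continuously on $z$, and satisfies the homomorphism identity.

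For the construction, given $m\in S$ and $j\in\{1,\ldots,d\}$, let $P^j_m:\Z_+\to K$ be the sum-function from Theorem~\ref{T:2} with $P^j_m(k)=(x_j\circ g\pw{k})_m$, choose $r=r(m,j)$ so that $P^j_m$ has period $c^r$, and define $\tilde P^j_m:\Z_c\to K$ by composing the canonical surjection $\Z_c\twoheadrightarrow\Z/c^r\Z$ with the map induced by $P^j_m$ on the quotient. With $K$ given the discrete topology, each $\tilde P^j_m$ is continuous, and setting
$$ (x_j\circ g\pw{z}):=\sum_{m\in S}\tilde P^j_m(z)\cdot m,\qquad g\pw{z}:=(x_j\circ g\pw{z})_{j=1}^d, $$
gives an element of $\F^d$. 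Since $g\pw{k}\in\M^d$ for integer $k$ one has $\tilde P^j_1\equiv 0$, and since $L(g\pw{k})=\ONE$ one has $\tilde P^j_{x_i}\equiv\delta_{ij}$; hence $g\pw{z}$ is tangent to the identity and therefore lies in $\G$. Continuity of $z\mapsto g\pw{z}$ into the $\M$-adic topology on $\G$ is automatic from the construction.

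For the homomorphism relation $g\pw{z+w}=g\pw{z}\circ g\pw{w}$, I would fix $m\in S$ and $j$ and observe that both sides give continuous maps $\Z_c\times\Z_c\to K$: the right-hand side is a finite polynomial in those coefficients of $g\pw{z}$ and $g\pw{w}$ of degree at most $\deg m$, each of which is continuous. The two sides agree on $\Z_+\times\Z_+$, which is dense in $\Z_c\times\Z_c$ (because $\{0,\ldots,c^r-1\}$ surjects onto $\Z/c^r\Z$ for each $r$), so continuity forces equality throughout; taking $w=-z$ then supplies invertibility. The delicate point—and the main obstacle—is checking that this new $g\pw{z}$ matches the pre-existing $g\pw{k}$ at \emph{negative} integers $k$ viewed inside $\Z_c$. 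For this I would fix $s=\deg m$ and take $r$ large enough that every sum-function $P^j_{m'}$ with $m'\in S$ of degree at most $s$ has period dividing $c^r$; plugging $k=0$ into the period relation gives $g\pw{c^r}\equiv\ONE\pmod{\M^{s+1}}$, and right-composing any $f\in\M^d$ with something congruent to $\ONE\pmod{\M^{s+1}}$ preserves $f$ modulo $\M^{s+1}$, so $g\pw{k+c^r}\equiv g\pw{k}\pmod{\M^{s+1}}$ for every $k\in\Z$—which is exactly what the $c$-adic extension prescribes, making the two definitions of $g\pw{k}$ agree.
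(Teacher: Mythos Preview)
Your proposal is correct and follows essentially the same route as the paper: use Theorem~\ref{T:2} together with the periodicity of sum-functions (Theorem~\ref{T:sum-fn}) to extend each coefficient to a continuous function on $\Z_c$, assemble these into $g\pw{z}$, and then pass the homomorphism identity from $\Z_+\times\Z_+$ to $\Z_c\times\Z_c$ by density. The only visible difference is in the treatment of negative integers: the paper simply observes that the homomorphism property forces the newly defined $g\pw{-n}$ to be the compositional inverse of $g\pw{n}$, hence to agree with the old $g\pw{-n}$ by uniqueness of inverses in $\G$, whereas you give a direct periodicity argument via $g\pw{c^r}\equiv\ONE\pmod{\M^{s+1}}$. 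Both are valid and equally short.
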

\begin{proof}
By Theorem \ref{T:2}, for each monic monomial $m\in S$,
we have a $d$-tuple of sum-functions $P_m(t)\in\Sigma^d$
over $K$ such that 
$$ g\pw{k} = \sum_m P_m(k) m, \ \forall k\in\Z_+. $$
For each $m$, the component functions of $P_m$ are
 of degree less than
$m$, and hence $P_m$ is periodic of
order $c^r$, where $r$ is the ceiling of $\log_c\deg m$,
i.e. $P_m(k+c^r)=P_m(k)$ for all $k\in\N$.

Each $z\in\Z_c$ has a $c$-adic expansion 
$z=\sum_{s=0}^\infty z_s p^s$, and we now define
$$ P_m(z):= P_m\left( \sum_{s=0}^r z_s p^s\right)\in K. $$ 

Observe that, by the periodicity, 
$$ P_m(z)= P_m\left( \sum_{s=0}^t z_s p^s \right) $$
whenever $t>r$, so that it does not matter
where we truncate the expansion of $z$: as long
as it is far enough out (depending on $m$), 
we get the same value for $P_m(z)$. Also,
if $z\in\Z_0$, then the value of $P_m(z)$
is the same as it was.

Now we define 
$$ g\pw{z}:= \sum_m P_m(z) m. $$
Then $z\mapsto g\pw{z}$ extends the iterates map from
$\Z$ to $\Z_c$.
The extended map is continuous from the $c$-adic valuation
topology on $\Z_c$ to the $\M$-adic valuation topology
on $\G$, because if $c^r$ divides $z-z'$, then
$P_m(z)=P_m(z')$ whenever $\deg m< c^r$.

Since $g\pw{k}\circ g\pw{k'} = g\pw{k'}\circ g\pw{k}$
for all $k,k'\in\Z_0$, and since $\N$ is dense
in $\Z_c$, it follows that
$$ g\pw{z}\circ g\pw{z'} = g\pw{z'}\circ g\pw{z}$$
whenever $z,z'\in\Z_c$, so we have a group
homomorphism, as asserted.

Finally, if $n\in\N$, then the newly-defined
$g\pw{-n}$ --- defined using the $c$-adic expansion of
$-n$ --- is the compositional inverse of $g\pw{n}$
(by the group homomorphism property), and
hence must agree with $(g^{-1})\pw{n}$
(by the uniqueness of inverses in the group $\G$),
so $z\mapsto g\pw{z}$ extends the composition map
from all of $\Z$, and not just from $\Z_+$.
\end{proof}  

\begin{corollary}
If $g\in\G$ is tangent to the identity, then
for each $n\in\N$ not divisible by $c$,
there exists an $n$-th compositional root
$h\in\G$ of $g$, i.e. an element with
$h\pw{n}=g$.
\end{corollary}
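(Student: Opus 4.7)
The plan is to deduce this almost immediately from Theorem \ref{T:c-adic-action}. Since $K$ is an integral domain of positive characteristic $c$, the integer $c$ must be prime (otherwise a nontrivial factorisation would exhibit zero divisors in $K$). So the hypothesis $c\nmid n$ is exactly the hypothesis that $n$ is coprime to $c$, and therefore $n$ is a unit in every residue ring $\Z/c^r\Z$, hence a unit in the $c$-adic integers $\Z_c$. Let $z\in\Z_c$ be the $c$-adic multiplicative inverse of $n$.

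Now set $h:=g\pw{z}$, using the continuous group homomorphism $z\mapsto g\pw{z}$ from $(\Z_c,+)$ into $\G$ provided by Theorem \ref{T:c-adic-action}. Because this map is a homomorphism and extends ordinary composition iteration, $n$-fold composition of $h$ with itself equals
\[
h\pw{n} \;=\; g\pw{z}\circ g\pw{z}\circ\cdots\circ g\pw{z} \;=\; g\pw{nz} \;=\; g\pw{1} \;=\; g,
\]
where the second equality uses the homomorphism property (with the sum in $\Z_c$), and the third uses $nz=1$ in $\Z_c$. This is the required $n$-th compositional root.

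There is essentially no obstacle: the real work has already been done in Theorem \ref{T:c-adic-action}. The only things to be careful about are (i) justifying that $n$ is invertible in $\Z_c$ (which needs primality of $c$, an automatic consequence of $K$ being a domain), and (ii) reconciling the two notions of iterate — the formal composition $h\pw{n}$ of $h$ with itself $n$ times, and the $c$-adic iterate of $g$ at the integer $nz\in\Z_c$. The latter is resolved by the final paragraph of the proof of Theorem \ref{T:c-adic-action}, which confirms that on $\Z\subset\Z_c$ the extended map $z\mapsto g\pw{z}$ really does coincide with ordinary iteration.
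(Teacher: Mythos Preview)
Your proof is correct and follows essentially the same approach as the paper: invoke Theorem~\ref{T:c-adic-action}, note that $n$ is a unit in $\Z_c$ since $c\nmid n$, and take $h=g\pw{1/n}$. The paper's proof is a two-line version of yours; your additional remarks justifying primality of $c$ and the coincidence of ordinary and $c$-adic iterates on $\Z$ are accurate elaborations of steps the paper leaves implicit.
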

\begin{proof}
If $n\in\N$ is not divisible by $c$,
then $n\in\Z_c^\times$, so
$g\pw{1/n}$ is an $n$-th compositional root
of $g$.
\end{proof}

In particular, if $c\not=2$, each $g\in\G$ tangent to the identity
has a compositional square root. In characteristic $2$, we have
roots of every odd order.

We can now conclude:
\begin{proof}[Proof of Theorem \ref{T:main-char-positive}]
If $g\in\G$ is tangent to the identity, 
and has infinite order,
then 
the map $k\mapsto g\pw{k}$ is injective
from $\Z_+$ into $\G$.  Since $g\pw{c^r}\to\ONE$
as $r\uparrow\infty$, we see that 
$\ONE$ is the limit of a sequence of
distinct iterates.

The map
$z\mapsto g\pw{z}$ is 
continuous from the compact space $\Z_c$
so the one-parameter
image subgroup is compact.  Since the identity
is not isolated, there are no isolated points,
so it is a compact metric space without isolated points,
and hence has the cardinality of the continuum.
\end{proof}

We don't see any reason why the map 
$z\mapsto g\pw{z}$ has to be injective
on $\Z_c$, so we can't say that the
one-parameter subgroup is isomorphic
to $(\Z_c,+)$. 

\subsection{Maps having linear part of finite order}
We close with some remarks about
the centralisers of 
formal maps having linear part of finite order.

Let $K$ be any integral domain.
Suppose $h\in\G$ is such that its linear
part $L:=L_1(h)$
is of finite order, say $s$. Then
$g:=h\pw{s}$ is tangent to the identity, and we may apply the
results of the last two subsection to $g$.

\noindent
\\1. If $h$ itself has finite order, and its order is not
a multiple of the characteristic $c$ of $K$
(--- this includes the case $c=0$),
then $h$ is conjugate to 
$L$ (--- compare the argument for lemma 2.1 in \cite{OFZ})
and so $h$ has order $s$. Then 
we can take any $f\in\F$   
with $f_1=1$ and form
$$ \tilde f:= \frac1s\left(
f+f\circ L+\cdots+f\circ L^{\circ(s-1)}
\right).
$$
Then $\tilde f\in\F$, $\tilde f_1=1$ 
and $\tilde f\circ L = \tilde f$.
So
$$ \bar f(x):= \tilde f(x)\cdot\ONE = (\tilde f(x)x_1,\ldots,\tilde f(x)x_d) 
$$
belongs to $\G$, is tangent to the identity, and
commutes with $L$.  It follows that all $\bar f^{\circ n}$
commute with $L$, and then that $\bar f^\lambda$
also commutes with $L$ for all $\lambda\in K$
in characteristic zero or for all $\lambda\in\Z_c$
in positive characteristic $c$. Thus $h$ also has a
large centraliser.

\noindent
\\2. The case when $K$ has positive characteristic $c$ and
$h\in\G$ has a finite order that is a multiple of
$c$ requires further analysis, and we do not
pursue it here.

\noindent
\\3. Observe that if $h$ has infinite order, then so does
$g$, and the fact that
$h$ commutes with each $g\pw{k}$ for $k\in\N$ implies that
$h$ commutes with the whole one-parameter group
(parametrised by $K$ or $\Z_c$, as the case may be) 
determined by $g$.

\noindent
\\4.
If $L(h)\not=\ONE$, then whenever  the characteristic
does not divide $s$, we have a $g\pw{1/s}\not=h$,
so we get a nontrivial element $h':=h\pw{-1}g\pw{1/s}$
of order $s$.  This allows us to factor
$h$ \emph{in its own centraliser}
 as the product of an element of order $s$
and an element tangent to the identity.

\end{document}